\def\1{\raisebox{2pt}{\rm{$\chi$}}}
\newtheorem{theorem}{Theorem}
\newtheorem{corollary}[theorem]{Corollary}
\newtheorem{lemma}[theorem]{Lemma}
\newtheorem{definition}[theorem]{Definition}
\newtheorem{remark}[theorem]{Remark}
\newtheorem{example}[theorem]{Example}
\newcommand{\R}{{\mathbb R}}
\newcommand{\N}{{\mathbb N}}
 \newcommand{\eps}{{\varepsilon}}
 \def\1{\raisebox{2pt}{\rm{$\chi$}}}
\def\vint_#1{\mathchoice%
          {\mathop{\kern 0.2em\vrule width 0.6em height 0.69678ex depth -0.58065ex
                  \kern -0.8em \intop}\nolimits_{\kern -0.4em#1}}%
          {\mathop{\kern 0.1em\vrule width 0.5em height 0.69678ex depth -0.60387ex
                  \kern -0.6em \intop}\nolimits_{#1}}%
          {\mathop{\kern 0.1em\vrule width 0.5em height 0.69678ex
              depth -0.60387ex
                  \kern -0.6em \intop}\nolimits_{#1}}%
          {\mathop{\kern 0.1em\vrule width 0.5em height 0.69678ex depth -0.60387ex
                  \kern -0.6em \intop}\nolimits_{#1}}}
\def\vintslides_#1{\mathchoice%
          {\mathop{\kern 0.1em\vrule width 0.5em height 0.697ex depth -0.581ex
                  \kern -0.6em \intop}\nolimits_{\kern -0.4em#1}}%
          {\mathop{\kern 0.1em\vrule width 0.3em height 0.697ex depth -0.604ex
                  \kern -0.4em \intop}\nolimits_{#1}}%
          {\mathop{\kern 0.1em\vrule width 0.3em height 0.697ex depth -0.604ex
                  \kern -0.4em \intop}\nolimits_{#1}}%
          {\mathop{\kern 0.1em\vrule width 0.3em height 0.697ex depth -0.604ex
                  \kern -0.4em \intop}\nolimits_{#1}}}
\newcommand{\aveint}[2]{\mathchoice%
          {\mathop{\kern 0.2em\vrule width 0.6em height 0.69678ex depth -0.58065ex
                  \kern -0.8em \intop}\nolimits_{\kern -0.45em#1}^{#2}}%
          {\mathop{\kern 0.1em\vrule width 0.5em height 0.69678ex depth -0.60387ex
                  \kern -0.6em \intop}\nolimits_{#1}^{#2}}%
          {\mathop{\kern 0.1em\vrule width 0.5em height 0.69678ex depth -0.60387ex
                  \kern -0.6em \intop}\nolimits_{#1}^{#2}}%
          {\mathop{\kern 0.1em\vrule width 0.5em height 0.69678ex depth -0.60387ex
                  \kern -0.6em \intop}\nolimits_{#1}^{#2}}}
\newcommand{\ud}{\, d}
\newcommand{\abs}[1]{\left| #1 \right|}
\newcommand{\ol}{\overline}
\newcommand{\Om}{\Omega}
\newcommand{\I}{\textrm{I}}
\newcommand{\II}{\textrm{II}}
\newcommand{\dist}{\operatorname{dist}}
\begin{document}

\title[Games for eigenvalues of the Hessian]
{\bf Games for eigenvalues of the Hessian and concave/convex envelopes}

\author[P. Blanc and J. D. Rossi]{Pablo Blanc and Julio D. Rossi}

\address{Departamento  de Matem\'atica, FCEyN, Universidad de Buenos Aires \hfill\break\indent
\qquad and IMAS - CONICET,
\hfill\break\indent Pabell\'on I, Ciudad Universitaria (1428),
Buenos Aires, Argentina.}

\email{ {\tt pblanc@dm.uba.ar, }\qquad {\tt jrossi@dm.uba.ar}\hfill\break\indent {\it Web page:}
{\tt http://mate.dm.uba.ar/$\sim$jrossi}}

\subjclass[2010]{35D40, 35J25, 26B25}

\keywords{Eigenvalues of the Hessian, Concave/convex envelopes, Games.}

\date{}

\begin{abstract} 
We study the PDE 
$\lambda_j(D^2 u)  = 0$,  in $\Omega$, with
$u=g$,  on $\partial \Omega$.
Here $\lambda_1(D^2 u) \leq ... \leq \lambda_N (D^2 u)$
are the ordered eigenvalues of the Hessian $D^2 u$. 
First, we show a geometric interpretation of the viscosity solutions to the problem in terms of convex/concave
envelopes over affine spaces of dimension $j$. In one of our main results,
we give necessary and sufficient 
conditions on the domain so that the problem has a continuous solution for every continuous datum $g$.
Next, we introduce a two-player zero-sum game whose
values approximate solutions to this PDE problem.
In addition, we show an asymptotic mean value characterization for the solution the the PDE.
\end{abstract}

\maketitle

\section{Introduction}

In this paper, we study the boundary value problem
\begin{equation}
\label{1.1}
\tag{$\lambda_j,g$}
\left\{
\begin{array}{ll}
\lambda_j(D^2 u)  = 0 , \qquad & \mbox{ in } \Omega, \\[5pt]
u=g , \qquad & \mbox{ on } \partial \Omega.
\end{array}
\right.
\end{equation}
Here $\Omega$ is a domain in $\mathbb{R}^N$ and 
for the Hessian matrix of a function $u:\Omega \mapsto \mathbb{R}$, $D^2 u$, we denote by
$$\lambda_1(D^2 u) \leq ... \leq \lambda_N (D^2 u)$$
the ordered eigenvalues. Thus our equation says that the $j-$st smaller eigenvalue of the Hessian
is equal to zero inside $\Omega$.  

The uniqueness and a comparison principle for the equation were proved in \cite{HL1}.
For the existence, in \cite{HL1} it is assumed that the domain is smooth 
(at least $C^2$) and such that $\kappa_1 \leq \kappa_2 \leq ... \leq  \kappa_{N-1}$, the main curvatures of $\partial \Omega$, verify
\begin{equation} \label{cond.Geo}
\tag{H}
 \kappa_j (x) > 0 \mbox{ and } \kappa_{N-j+1} (x) >0, \qquad \forall x \in \partial \Omega.
\end{equation}

Our main goal here is to improve the previous result and 
give sufficient and necessary conditions on the domain 
(without assuming smoothness of the boundary)
so that the problem has a continuous solution for every continuous data $g$.
Our geometric condition on the domain reads as follows:
Given $y\in\partial\Omega$ we assume that for every
 $r>0$ there exists $\delta>0$ such that for every $x\in B_\delta(y)\cap \Omega$ and $S\subset\R^N$ a subspace of dimension $j$, there exists $v\in S$ of norm 1 such that
\begin{equation}
\label{condG1}
\tag{$G_j$} 
\{x+tv\}_{t\in \R}\cap B_r(y)\cap \partial\Omega\neq\emptyset.
\end{equation}
We say that $\Omega$ satisfies condition (G) if it satisfies both $(G_j)$ and $(G_{N-j+1})$.

\begin{theorem} \label{teo.main}
The equation \eqref{1.1} has a continuous solution for every continuous data $g$ if and only 
if $\Omega$ satisfies condition {\rm (G)}.
\end{theorem}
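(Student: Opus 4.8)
The plan is to prove the two implications separately, using the geometric/PDE machinery (convex/concave envelopes over $j$-dimensional affine subspaces) established earlier in the paper. Throughout, recall that a viscosity solution $u$ of $\lambda_j(D^2u)=0$ is characterized by being, on each $j$-dimensional affine slice, simultaneously $j$-plurisubharmonic-type convex and $(N-j+1)$-concave; equivalently $u$ satisfies the Perron-type property that $u$ equals the appropriate "convex-along-$j$-planes" envelope of $g$ from below and the dual concave envelope from above. I will use the comparison principle from \cite{HL1}, which gives uniqueness, so the only issue is continuity up to the boundary, i.e.\ attainment of the boundary datum. Hence the theorem reduces to: barriers exist at every $y\in\partial\Omega$ for every continuous $g$ if and only if (G) holds.

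\emph{Sufficiency ((G) $\Rightarrow$ existence).} First I would build the candidate solution by Perron's method: let $u = \sup\{w : w \text{ is a subsolution}, \ w\le g \text{ on } \partial\Omega\}$, which by the standard argument (sup of subsolutions is a subsolution, and the usual bump/upper-envelope trick for the supersolution property, both of which go through here because the operator $\lambda_j$ is degenerate elliptic and the comparison principle holds) is the unique viscosity solution in $\Omega$. The content is continuity at the boundary: given $y\in\partial\Omega$ and $\eps>0$, pick $r$ so that $|g(z)-g(y)|<\eps$ on $B_r(y)\cap\partial\Omega$, and then the $\delta$ from $(G_j)$. For $x\in B_\delta(y)\cap\Omega$ and \emph{any} $j$-subspace $S$, condition $(G_j)$ produces a direction $v\in S$ along which the line through $x$ exits the domain inside $B_r(y)$; since a subsolution restricted to that line is convex (this is exactly the one-dimensional manifestation of $\lambda_j(D^2u)\le 0$ controlling a smallest-$j$ direction), it is bounded above on the segment inside $\Omega$ by the boundary values at its two endpoints, which lie in $B_r(y)\cap\partial\Omega$ (using also a crude global bound $\|g\|_\infty$ if one endpoint escapes). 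Taking the infimum over $S$ and combining with the dual estimate from $(G_{N-j+1})$ for supersolutions gives $|u(x)-g(y)|\le \eps + o(1)$ as $x\to y$. A cleaner route, which I would actually write, is to construct an explicit barrier: using (G) one shows that the function $x\mapsto g(y)+\eps+ C\,\Psi(x)$, where $\Psi$ is a nonnegative function vanishing at $y$ built from the distance to suitable exterior "needles" in the good directions, is a supersolution dominating $g$ near $y$ and bounded by $\|g\|_\infty$ away from $y$; symmetrically for the subsolution barrier. Either way this step is mostly bookkeeping once (G) is in hand.

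\emph{Necessity ($\neg$(G) $\Rightarrow$ failure).} Suppose, say, $(G_j)$ fails at some $y\in\partial\Omega$: there is $r>0$ such that for every $\delta>0$ we can find $x_\delta\in B_\delta(y)\cap\Omega$ and a $j$-subspace $S_\delta$ with $(\{x_\delta+tv\}_t\cap B_r(y)\cap\partial\Omega)=\emptyset$ for \emph{every} unit $v\in S_\delta$. I would choose $g$ continuous on $\partial\Omega$ equal to $0$ on $B_{r/2}(y)\cap\partial\Omega$ and equal to $1$ on $\partial\Omega\setminus B_r(y)$, with $0\le g\le 1$; the point is that $g$ is $\eps$-small near $y$ but every line through $x_\delta$ in the bad subspace $S_\delta$, staying inside $\Omega$, has its relevant endpoint(s) outside $B_r(y)$ where $g\ge$ some fixed positive value. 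Using the envelope characterization, the solution (if it existed and were continuous) would have to satisfy, at $x_\delta$, $u(x_\delta)\ge$ (concave/convex envelope value along the worst slice) $\ge c>0$ uniformly in $\delta$, by explicitly testing against a linear-on-$S_\delta$ barrier from below that forces the value up. Since $x_\delta\to y$ while $u$ would have to converge to $g(y)=0$, we get a contradiction; hence no continuous solution exists for this $g$. The symmetric argument handles failure of $(G_{N-j+1})$ (flip the roles of sub/supersolution and convex/concave, i.e.\ take $g$ large near $y$ and small away).

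\emph{Main obstacle.} The delicate point is the necessity direction: one must extract from the quantitative failure of $(G_j)$ a genuine obstruction to the \emph{viscosity} solution, not merely to a hand-picked smooth competitor. The clean way is to leverage the geometric characterization proved earlier — that a viscosity supersolution is concave along every $j$-dimensional affine slice (in the $(N-j+1)$ part) and a subsolution is controlled by $j$-dimensional convex envelopes — so that on the bad slice through $x_\delta$ one has an honest lower bound $u(x_\delta)\ge \inf\{$boundary values of $g$ on $(x_\delta+S_\delta)\cap\partial\Omega\}$; the failure of $(G_j)$ guarantees this infimum stays bounded away from $g(y)$ uniformly as $\delta\to 0$. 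Making the slice-wise envelope comparison rigorous (in particular handling lines that exit $\Omega$ in more than two points, or whose closure meets $\partial\Omega$ only far from $y$) is where the care is needed; the rest is standard Perron theory plus the comparison principle of \cite{HL1}.
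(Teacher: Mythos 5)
Your overall strategy --- reduce to boundary continuity via the comparison principle, use the affine-slice envelope characterization from Theorem~\ref{teo.convex.envelope.k.intro}, and treat $(G_j)$ and $(G_{N-j+1})$ symmetrically --- is indeed the paper's strategy. But in the necessity direction your choice of the counterexample datum $g$ is paired with the wrong slice characterization, and the argument you sketch would not close.

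Recall the two slice properties. A subsolution $v$ of $\lambda_j=0$ satisfies $v\le z$ on every $j$-dimensional slice $D$, where $z$ is the \emph{concave} envelope of $v|_{\partial D}$ (Lemma~\ref{lemma-star}); dually a supersolution satisfies $w\ge$ the \emph{convex} envelope on every $(N-j+1)$-dimensional slice (Remark~\ref{rem.conc/convex}). In your write-up these are swapped (``supersolution is concave along $j$-dimensional slices'', ``subsolution controlled by $j$-dimensional convex envelopes''), and the sign error propagates. With your choice of $g$ ($g\equiv 0$ near $y$, $g\equiv 1$ away), the trace of $g$ on the bad $j$-slice $S_\delta$ is $\approx 1$, so the concave envelope $z_{S_\delta}$ is $\approx 1$, and the only information coming from the $j$-slice is the vacuous upper bound $u(x_\delta)\le z_{S_\delta}$. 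Nothing forces the lower bound $u(x_\delta)\ge c>0$ you claim: the lower-bound characterization lives on $(N-j+1)$-dimensional slices, which have no connection to the failure of $(G_j)$; and the ``linear-on-$S_\delta$ barrier from below'' cannot help because any affine competitor $\ell$ with $\ell\le g$ on $\partial\Omega$ must satisfy $\ell(x_\delta)\to\ell(y)\le g(y)=0$. The paper's construction is the reverse of yours: take $g(y)=1$ and $g\equiv 0$ on $\partial\Omega\setminus B_r(y)$; then $g\equiv 0$ on $S_n\cap\partial\Omega$, so the concave envelope on the slice is $z\equiv 0$, the $j$-slice subsolution bound forces $u(x_n)\le 0$, and continuity would require $0\ge\lim_n u(x_n)=g(y)=1$, a contradiction. (For $(G_{N-j+1})$ failure take $g(y)=-1$, $g\equiv 0$ away, and use the convex-envelope lower bound on $(N-j+1)$-slices.)

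The sufficiency sketch is structurally the same as the paper's two-point estimate along a segment $[A,B]$ produced by $(G_j)$, with the far endpoint controlled by $\|g\|_\infty$ times a small ratio of distances. One inaccuracy: for $j>1$ a subsolution is not ``convex along lines''; the correct device is that the concave envelope $z$ of $g|_{\partial D}$ is concave with $z\ge g$ at $A,B$, hence $z(x)\ge\bigl(g(A)\,\dist(x,B)+g(B)\,\dist(x,A)\bigr)/\bigl(\dist(x,A)+\dist(x,B)\bigr)$, which is exactly the bound the paper uses.
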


As part of the proof of this theorem we use the following geometric interpretation of solutions to \eqref{1.1}. 
Let $H_j$ be the set of functions $v$ such that 
$$v \leq g \qquad \mbox{on } \partial \Omega,$$
and have the following property: for every $S$ affine 
of dimension $j$ and every $j-$dimensional domain 
$D \subset S \cap \Omega$ it holds that 
$$
v \leq z \qquad \mbox{ in } D
$$
where $z$ is the concave envelope of $v|_{\partial D}$ in $D$. 
Then we have the following result:

\begin{theorem} \label{teo.convex.envelope.k.intro} The function
$$
u(x) = \sup_{v \in H_j} v(x).
$$
is the largest viscosity solution to $\lambda_j(D^2 u)  = 0 $,  in $ \Omega$, 
with $u\leq g$ on $\partial \Omega$.
\end{theorem}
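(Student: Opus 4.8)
The plan is to prove Theorem~\ref{teo.convex.envelope.k.intro} by Perron's method, the crux being a dictionary between the class $H_j$ and the viscosity subsolutions of \eqref{1.1}. Precisely, I would first establish: \emph{$v\in H_j$ if and only if $v\le g$ on $\partial\Omega$ and $v$ is a viscosity subsolution of $\lambda_j(D^2 v)=0$ in $\Omega$} (using the convention in which a subsolution touched from above by a smooth $\varphi$ at $x_0$ satisfies $\lambda_j(D^2\varphi(x_0))\ge 0$). Once this is available, the statement follows from standard arguments that I outline at the end.

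For the implication ``$H_j\Rightarrow$ subsolution'' I argue by contradiction. If a smooth $\varphi$ touches $v$ from above at $x_0$ with $\lambda_j(D^2\varphi(x_0))<0$, then by the min--max formula $\lambda_j(A)=\min_{\dim W=j}\lambda_{\max}(A|_W)$ there is a $j$-dimensional subspace $W$ on which $D^2\varphi(x_0)$ is negative definite. Subtracting the affine part of $\varphi$ at $x_0$ (which affects neither the defining property of $H_j$ nor $\lambda_j(D^2\varphi)$), on the affine slice $S=x_0+W$ one gets $v\le\varphi\le -c|x-x_0|^2$ near $x_0$. Choosing for $D$ a small ball in $S$ centred at $x_0$, the data $v|_{\partial D}$ is bounded above by the negative constant $-cr^2$, hence its concave envelope $z$ satisfies $z\le -cr^2<0=v(x_0)$ at the centre, contradicting $v\le z$ on $D$.

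The reverse implication ``subsolution $\Rightarrow H_j$'' is the heart of the argument and I expect it to be the main obstacle. I would split it in two. First, a \emph{restriction lemma}: if $v$ is a viscosity subsolution of $\lambda_j(D^2 v)=0$ in $\Omega$ then, for every $j$-dimensional affine slice $S$, the restriction $v|_{S\cap\Omega}$ is a viscosity subsolution, in dimension $j$, of the maximal-eigenvalue equation $\lambda_{\max}(D^2_S w)=0$. This is seen by extending any test function $\psi$ on $S$ touching $v|_S$ from above at $p$ to $\Psi(x)=\psi(P_S x)+M|(I-P_S)(x-p)|^2$ with $M$ large: $\Psi$ touches $v$ from above at $p$, and for $M$ large the $j$ smallest eigenvalues of $D^2\Psi(p)$ are exactly those of $D^2_S\psi(p)$, so the subsolution property of $v$ forces $0\le\lambda_j(D^2\Psi(p))=\lambda_{\max}(D^2_S\psi(p))$. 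Second, given a $j$-dimensional domain $D\subset S\cap\Omega$, let $z$ be the concave envelope of $v|_{\partial D}$ in $D$. A one--variable inspection along lines shows that any concave function is automatically a viscosity supersolution of $\lambda_{\max}(D^2_S\cdot)=0$; since $z\ge v$ on $\partial D$ and, by the restriction lemma, $v|_S$ is a subsolution of the same equation in $D$, the comparison principle for this degenerate elliptic equation (as in \cite{HL1}) yields $v\le z$ in $D$. As $S$ and $D$ were arbitrary, $v\in H_j$.

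With the dictionary available, Theorem~\ref{teo.convex.envelope.k.intro} is concluded as follows. The family $H_j$ is stable under pointwise suprema: if $v_\alpha\in H_j$ then on any slice $v_\alpha\le z_\alpha\le z$ by monotonicity of the concave envelope with respect to the boundary data, so $\sup_\alpha v_\alpha\le z$; passing to the upper semicontinuous envelope preserves this, so $u=\sup_{v\in H_j}v$ (in the usual viscosity sense, i.e.\ through its semicontinuous envelopes) lies in $H_j$ and is hence a viscosity subsolution, with $u\le g$ on $\partial\Omega$ by construction. If $u$ were not a supersolution at some $x_0\in\Omega$, the classical Perron bump --- replacing $u$ near $x_0$ by $\max\{u,\psi+\delta-\eta|x-x_0|^2\}$ for a suitable $\psi$ with $\lambda_j(D^2\psi(x_0))>0$, which is a classical subsolution there, and gluing --- would produce an element of $H_j$ lying strictly above $u$ somewhere, contradicting maximality. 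Thus $u$ is a viscosity solution. Finally any viscosity solution $w$ with $w\le g$ on $\partial\Omega$ is in particular a subsolution below $g$ on the boundary, hence $w\in H_j$ by the dictionary, hence $w\le u$; this gives the maximality. The remaining technical points --- handling the semicontinuous envelopes, the regularity of the concave envelope up to $\partial D$, and the details of the gluing --- are routine.
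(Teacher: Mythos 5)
The serious gap is in your restriction lemma. The extension $\Psi(x)=\psi(P_S x)+M|(I-P_S)(x-p)|^2$ does not, for any $M$, touch a general upper semicontinuous subsolution $v$ from above at $p$: a quadratic penalty in the transverse variables cannot dominate a function whose excess $v(x)-\psi(P_S x)$ decays only Lipschitz-fast off $S$. Concretely, in $\R^2$ with $j=1$, $S=\{x_2=0\}$, $p=0$, $v(x)=|x_2|$ (convex, hence a viscosity subsolution of $\lambda_1(D^2v)=0$), and $\psi\equiv 0$, one has $\Psi(x)=Mx_2^2<|x_2|=v(x)$ for all $0<|x_2|<1/M$, so $\Psi$ is not a valid test function at $p$. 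Consequently the claimed inequality $\lambda_{\max}(D^2_S\psi(p))=\lambda_j(D^2\Psi(p))\ge 0$ is never reached, and the implication ``subsolution $\Rightarrow H_j$'' --- on which your dictionary, your Perron step, and your maximality argument all rest --- is unproved. Restricting viscosity subsolutions to lower-dimensional affine slices is a genuinely delicate matter and is not obtained by this naive lift of test functions.

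Your forward direction ($H_j\Rightarrow$ subsolution) is essentially the paper's argument, modulo the care needed for the upper semicontinuous envelope, which the paper isolates in Lemma~\ref{lemma-star}. The paper's route around the obstruction you hit is worth noting: it never asserts the full equivalence between $H_j$ and the set of subsolutions below $g$. Having shown only $H_j\subset\{\text{subsolutions}\}$, so that $u=\sup_{v\in H_j}v$ is a subsolution, it proves the supersolution property of $u$ by a Perron bump carried out \emph{inside} $H_j$: if $\phi$ touches $u_*$ from below at $x_0$ with $\lambda_j(D^2\phi(x_0))>0$, then $D^2\phi(x_0)$ has at most $j-1$ nonpositive eigenvalues, so every $j$-dimensional affine plane through $x_0$ contains a direction of strict positive curvature for $\phi$; this places $\hat u=\max\{u,\phi+\eps\}$ in $H_j$ near $x_0$ and contradicts $u$ being the supremum. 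No restriction lemma is needed. (Note also that the paper, like your write-up, does not spell out why $u$ dominates \emph{every} solution $w\le g$; your dictionary, if established, would supply exactly that piece --- but the proof you give for it breaks.)
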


Notice that, for $j=N$, we have that the equation
for the concave envelope of $u|_{\partial \Omega}$ in $\Omega$
is just $\lambda_N =0$; while the equation for the convex envelope 
is $\lambda_1 =0$. See \cite{OS}. Notice that our condition (G) in these 
two extreme cases
is just saying that the domain is strictly convex. Hence, Theorem \ref{teo.main}
implies that for a strictly convex domain the concave or the convex envelope
of a continuous datum $g$ on its boundary is attached to $g$ continuously.
Note that the concave/convex envelope of $g$ inside $\Omega$ is well defined for every domain
(just take the infimum/supremum of concave/convex functions that are above/below
$g$ on $\partial \Omega$). The main point of Theorem \ref{teo.main} is the continuity
up to the boundary of the concave/convex envelope of $g$ if and only if (G) holds. 
Remark that Theorem \ref{teo.convex.envelope.k.intro} 
says that the equation $\lambda_j(D^2 u)  = 0$ for $1<j<N$ is also related to
concave/convex envelopes of $g$, but in this case we consider concave/convex functions
restricted to affine subspaces. Also in this case Theorem \ref{teo.main} gives a necessary
and sufficient condition on the domain in order to have existence of a solution that is continuous
up to the boundary.

Remark that we have that $u$ is a continuous solution to \eqref{1.1} if and only if $-u$ is a solution to
$(\lambda_{N-j+1},-g)$. This fact explains why we have to include both $(G_j)$ and $(G_{N-j+1})$
in condition (G).

Our original motivation to study the problem \eqref{1.1} comes from game theory.
Let us describe the game that we propose to approximate solutions to the equation. 
It is a two-player zero-sum game. Fix a domain $\Omega \subset \mathbb{R}^N$, $\eps>0$ and a final payoff function 
$g :\mathbb{R}^N \setminus \Omega \mapsto \mathbb{R}$. 
The rules of the game are the following:
the game starts with a token at an initial position $x_0 \in \Omega$, 
one player (the one who wants to minimize the expected payoff)
chooses a subspace $S$ of dimension $j$ and then the second player
(who wants to maximize the expected payoff)
chooses one unitary vector, $v$, in the subspace $S$. Then the position of the token
is moved to $x\pm \epsilon v$ with equal probabilities. 
The game continues until the position of the token leaves the domain and at this 
point $x_\tau$ the first player gets $-g(x_\tau)$ and the second player $g(x_\tau)$.
When the two players fix their strategies $S_I$ (the first player chooses a $j-$dimensional subspace
$S$ at every step of the game) and $S_{II}$ (the second player chooses a unitary vector $v\in S$
at every step of the game) we can compute the expected outcome as
$$
\mathbb{E}_{S_I, S_{II}}^{x_0} [g (x_\tau)].
$$
Then the values of the game for any $x_0 \in \Omega$ 
for the two players are defined as
\[
u^\eps_\I(x_0)=\inf_{S_\I}\sup_{S_{\II}}\,
\mathbb{E}_{S_{\I},S_\II}^{x_0}\left[g(x_\tau)\right], \qquad
u^\eps_\II(x_0)=\sup_{S_{\II}}\inf_{S_\I}\,
\mathbb{E}_{S_{\I},S_\II}^{x_0}\left[g(x_\tau)\right].
\]
When the two values coincide we say that the game has a value.

Next, we state that this game has a value and the value verifies an equation
(called the Dynamic Programming Principle (DPP) in the literature).

\begin{theorem} \label{teo.dpp}
The game has value 
$$
u^\epsilon = u_{I}^\epsilon = u_{II}^\epsilon
$$
that verifies 
\begin{equation*}\label{eq.DPP}
\tag{DPP}
\left\{
\begin{array}{ll}
\displaystyle u^\epsilon (x) = \inf_{
{dim}(S)=j
} \sup_{
v\in S, |v|=1}
\left\{ \frac{1}{2} u^\epsilon (x + \epsilon v) + \frac{1}{2} u^\epsilon (x - \epsilon v)
\right\}  & x \in \Omega, \\[5pt]
u^\epsilon (x) = g(x)  & x \not\in \Omega.
\end{array}
\right.
\end{equation*}
\end{theorem}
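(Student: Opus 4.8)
The plan is to follow the standard two-step strategy for existence of game values in tug-of-war–type games: first establish that the Dynamic Programming Principle \eqref{eq.DPP} admits a solution, and then identify that solution with both $u^\eps_\I$ and $u^\eps_\II$.

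For the first step I would construct the DPP solution by a fixed-point/iteration argument. Define the operator $T$ on bounded functions $w$ by
\[
(Tw)(x) = \inf_{\dim(S)=j}\ \sup_{v\in S, |v|=1}\ \left\{\tfrac12 w(x+\eps v)+\tfrac12 w(x-\eps v)\right\}
\]
for $x\in\Omega$ and $(Tw)(x)=g(x)$ for $x\notin\Omega$. One checks that $T$ is monotone and commutes with addition of constants, so the Perron-type function
\[
u^\eps(x) = \inf\{w(x) : w \text{ is a bounded supersolution of the DPP}\}
\]
is well defined (the constant $\sup|g|$ is a supersolution, assuming $g$ bounded, which we may since the relevant values of $g$ lie near $\partial\Omega$) and is itself a solution of the DPP; boundedness of $u^\eps$ follows from comparison with constant sub/supersolutions $\pm\sup|g|$. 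I would remark that one needs $g$ bounded near $\partial\Omega$, or alternatively that the game terminates almost surely, for this to make sense — see the discussion below.

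For the second step, let $u^\eps$ be the DPP solution just produced. I would show $u^\eps_\II \le u^\eps \le u^\eps_\I$ together with the trivial inequality $u^\eps_\I \le u^\eps_\II$ (which holds for any zero-sum game, since fixing the minimizer's strategy first can only help the maximizer), forcing all three to coincide. To prove $u^\eps \le u^\eps_\I$: given $\eta>0$, the minimizing player picks, at a token position $x$, a subspace $S$ that is within $\eta 2^{-k-1}$ of attaining the infimum in the DPP at step $k$; then the process $M_k = u^\eps(x_k) + \eta 2^{-k}$ is a supermartingale under any strategy $S_\II$ of the opponent (using the DPP inequality and that the maximizer can do no better than the sup), so by optional stopping $\mathbb{E}^{x_0}_{S_\I,S_\II}[u^\eps(x_\tau)] \le u^\eps(x_0)+\eta$; since $u^\eps = g$ outside $\Omega$ this gives $u^\eps_\I(x_0)\le u^\eps(x_0)+\eta$. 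The reverse inequality $u^\eps_\II \ge u^\eps$ is symmetric, using an almost-optimal choice of unit vector $v\in S$ by the maximizer to build a submartingale. The one genuine subtlety is justifying the optional stopping / passage to the limit in $\tau$: one must know the game terminates with probability one (and with finite expected exit time, or at least that the stopped-process argument can be pushed to $k\to\infty$). This is where some care is needed — and it is the step I expect to be the main obstacle, since with the minimizer choosing the subspace $S$ adversarially it is not obvious that the token ever leaves $\Omega$.

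To handle termination I would argue as follows: whatever $j$-dimensional subspace $S$ is chosen, the maximizer can always select $v\in S$ with a definite component in, say, the $x_1$-direction unless $S$ is contained in the hyperplane $\{x_1=\text{const}\}$ through the token; but $S$ has dimension $j\ge 1$, so at worst the maximizer picks a fixed coordinate direction and the first coordinate of the token performs a symmetric $\pm\eps$ random walk, which exits any bounded slab in finite expected time. More robustly, since $\Omega$ is bounded one can exhibit a smooth function $\phi$ (for instance $\phi(x)=|x|^2$, or a perturbation thereof) with $\tfrac12\phi(x+\eps v)+\tfrac12\phi(x-\eps v)-\phi(x)=\eps^2|v|^2=\eps^2$ for every unit $v$; hence $\phi(x_k)-\eps^2 k$ is a bounded-from-above supermartingale regardless of both players' choices, which forces $\mathbb{E}[\tau]\le \eps^{-2}\,\mathrm{diam}(\Omega)^2<\infty$ and in particular $\tau<\infty$ almost surely. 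With finite expected exit time the martingale arguments above go through by dominated/optional-stopping theorems, the parameter $\eta$ is sent to $0$, and we conclude $u^\eps_\I=u^\eps_\II=u^\eps$ solves \eqref{eq.DPP}. Finally I would note uniqueness of the DPP solution follows from the same comparison argument (a bounded subsolution lies below a bounded supersolution, via the supermartingale $M_k$), so $u^\eps$ is unambiguously \emph{the} value.
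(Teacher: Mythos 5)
Your proposal follows essentially the same route as the paper: existence of a bounded DPP solution by a Perron-type argument (the paper invokes the general result of Liu--Schikorra \cite{QS}), almost-sure termination of the game (with finite expected exit time) via the observation that $|x_k-p|^2-\eps^2 k$ is a bounded-above martingale so $\tau<\infty$ a.s., and then the identification $u^\eps_\I = u^\eps_\II = u^\eps$ by strategies that come within $\eta 2^{-(k+1)}$ of attaining the $\inf$/$\sup$, yielding a super-/sub-martingale and concluding via optional stopping.

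One slip worth correcting: in your overview you announce the ``trivial'' inequality as $u^\eps_\I \le u^\eps_\II$ and say you will show $u^\eps_\II \le u^\eps \le u^\eps_\I$ --- all three of these are written in the wrong direction. The always-true inequality is maximin $\le$ minimax, i.e.\ $u^\eps_\II \le u^\eps_\I$, and your detailed argument in fact establishes the correct chain: the supermartingale built from Player I's quasi-optimal subspace choice gives $u^\eps_\I(x_0)\le u^\eps(x_0)+\eta$, and the symmetric submartingale for Player II gives $u^\eps_\II(x_0)\ge u^\eps(x_0)-\eta$, hence $u^\eps_\I \le u^\eps \le u^\eps_\II$. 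Combined with $u^\eps_\II \le u^\eps_\I$ this forces equality. So the execution is right, but the plan statement (and the heuristic you attach to it) has the inequalities reversed; as written they would not close the loop for the right reason.
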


Our next goal is to look for the limit as $\eps \to 0$. 
To this end we need
another geometric assumption on $\partial \Omega$. 
Given $y\in\partial\Omega$ we assume that there exists
 $r>0$ such that for every $\delta>0$ there exists $T\subset\R^N$ a subspace of dimension $j$, $v\in\R^N$ of norm 1, $\lambda>0$ and $\theta>0$ such that
\begin{equation} \label{cond.geoj}
\tag{$F_j$}
\{x\in\Omega\cap B_r(y)\cap T_\lambda: \langle v,x-y\rangle<\theta\}\subset B_\delta(y)
\end{equation}
where 
\[
T_\lambda=\{x\in\R^N: d(x,T)<\lambda\}.
\]
For our game with a given $j$ we will assume that $\Omega$ satisfies 
both ($F_j$) and ($F_{N-j+1}$), in this case we will say that $\Omega$ satisfy condition ($F$).

For example, if we consider the equation $\lambda_2=0$ in $\R^3$, we will require that the domain satisfy $(F_2)$ as illustrated in Figure~\ref{fig:condition}.

\begin{figure}
\includegraphics[scale=0.0625]{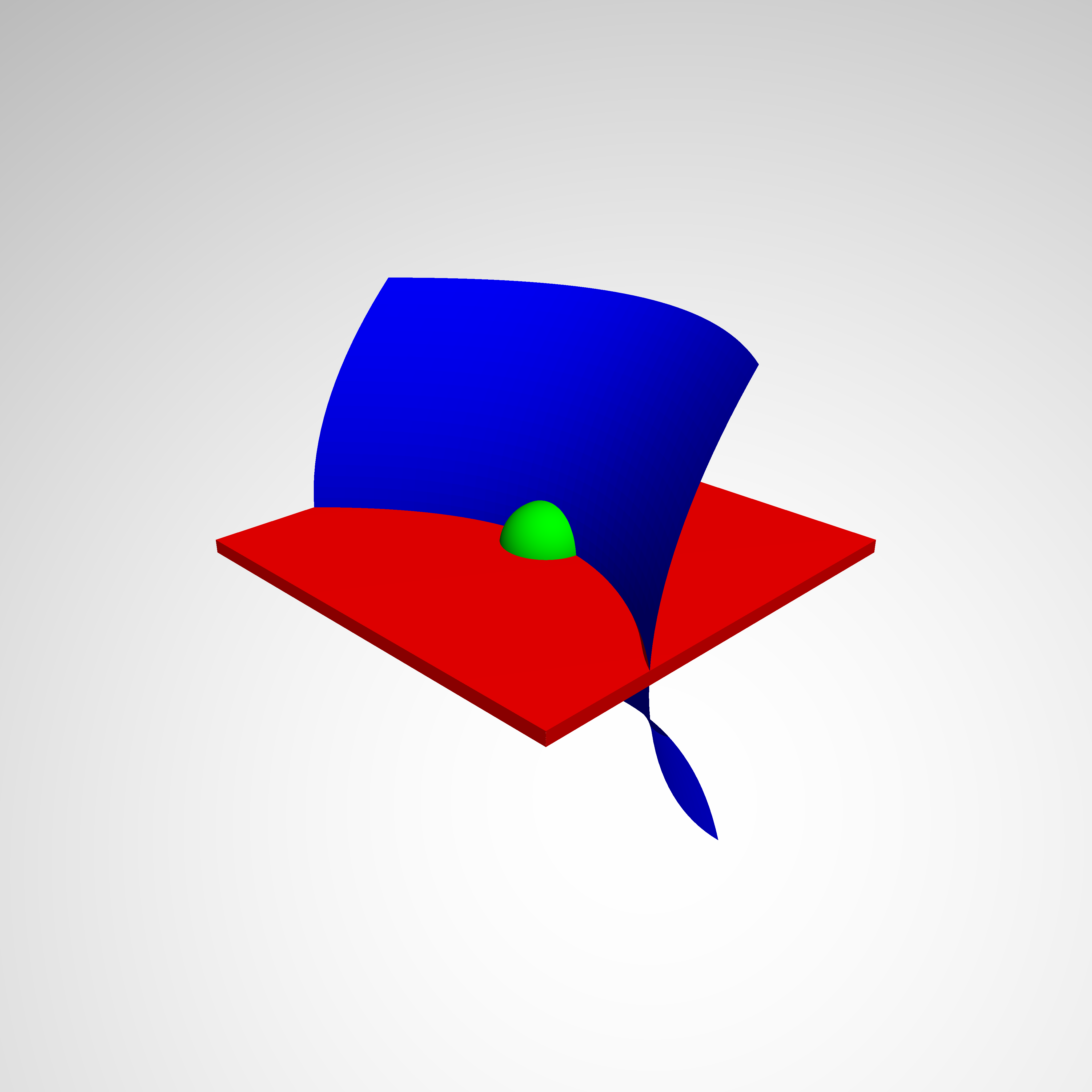}
\label{fig:condition}
\caption{Condition $(F_2)$ in $\R^3$. We have $\partial\Omega$ in blue, $B_\delta(y)$ in green and $T_\lambda$ in red.}
\end{figure}

\begin{theorem} \label{teo.converge}
Assume that $\Omega$ satisfies 
{\rm ($F$)} and let $u^\eps$ be the values of the game. Then,
\begin{equation*}
u^\eps \to u, \qquad \mbox{ as } \epsilon \to 0,
\end{equation*}
uniformly in $\overline{\Omega}$. Moreover, the limit $u$ is characterized as the unique viscosity
solution to
\begin{equation*}
\left\{
\begin{array}{ll}
\lambda_j(D^2 u)  = 0 , \qquad & \mbox{ in } \Omega, \\[5pt]
u=g , \qquad & \mbox{ on } \partial \Omega.
\end{array}
\right.
\end{equation*}
\end{theorem}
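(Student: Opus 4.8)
The plan is to run the standard machinery for convergence of values of zero-sum games to viscosity solutions, in the spirit of Manfredi–Parviainen–Rossi and the book of Blanc–Rossi, but with the extra geometric input coming from condition $(F)$ that is needed precisely because the underlying operator is degenerate and the boundary need not be reached instantly. The first step is to establish that $\{u^\eps\}_{\eps>0}$ is precompact in $C(\overline\Omega)$. Since the $u^\eps$ are uniformly bounded (by $\|g\|_\infty$, from the DPP of Theorem \ref{teo.dpp}), by a standard Arzelà–Ascoli-type argument it suffices to produce an asymptotic equicontinuity estimate: for every $\eta>0$ there are $\eps_0,\delta>0$ so that $|u^\eps(x)-u^\eps(z)|<\eta$ whenever $|x-z|<\delta$, $\eps<\eps_0$. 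Interior asymptotic continuity follows from a translation/coupling argument on the DPP together with the fact that along each move the token is displaced by exactly $\eps$; the delicate part is continuity up to $\partial\Omega$, and this is exactly where condition $(F_j)$ and $(F_{N-j+1})$ enter: near a boundary point $y$ the minimizing player, by choosing the subspace $T$ supplied by $(F_j)$, can force the token into an ever-thinner slab $T_\lambda$ from which it must exit the domain inside $B_\delta(y)$; quantifying this via a supermartingale/optional-stopping estimate (using $\langle v,x-y\rangle$ as a Lyapunov function, as in the classical analysis of exit times for tug-of-war) yields the boundary barrier. Thus I would first build, for each $y\in\partial\Omega$, explicit discrete super- and sub-solutions (barriers) of the DPP that vanish at $y$, using $(F)$, and deduce uniform-near-the-boundary estimates $|u^\eps(x)-g(y)|\le\omega(|x-y|)+o_\eps(1)$.

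The second step is to pass to the limit. Take a subsequence with $u^{\eps_k}\to u$ uniformly on $\overline\Omega$ (using the equicontinuity from Step 1 and a diagonal argument to upgrade local uniform convergence to uniform on the compact set $\overline\Omega$). The boundary estimates from Step 1 give $u=g$ on $\partial\Omega$ continuously. To see that $u$ is a viscosity solution of $\lambda_j(D^2u)=0$ in $\Omega$, I would use the now-standard test-function argument: let $\phi$ be smooth and touch $u$ from below at $x_0\in\Omega$ (strictly, locally). Using uniform convergence, pick points $x_k\to x_0$ where $u^{\eps_k}-\phi$ has an approximate minimum, plug $\phi$ into the DPP inequality at $x_k$, and Taylor-expand:
\[
\phi(x_k)\le \inf_{\dim S=j}\sup_{v\in S,|v|=1}\Big\{\phi(x_k)+\tfrac{\eps_k^2}{2}\langle D^2\phi(x_k)v,v\rangle+o(\eps_k^2)\Big\}.
\]
Dividing by $\eps_k^2/2$ and letting $k\to\infty$ gives $0\le \inf_{\dim S=j}\sup_{v\in S,|v|=1}\langle D^2\phi(x_0)v,v\rangle$. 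By the Courant–Fischer min–max characterization of eigenvalues, that $\inf$–$\sup$ equals exactly $\lambda_j(D^2\phi(x_0))$, so $\lambda_j(D^2\phi(x_0))\ge0$, i.e.\ $u$ is a viscosity supersolution; the symmetric argument with $\phi$ touching from above gives the subsolution inequality $\lambda_j(D^2\phi(x_0))\le0$.

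The third and final step is identification and upgrade of the convergence. The comparison principle for \eqref{1.1} (proved in \cite{HL1}) shows the viscosity solution with datum $g$ is unique, and Theorem \ref{teo.main} (whose hypotheses I would check are implied by $(F)$, or invoke directly since $(F)$ guarantees existence via barriers) guarantees this $u$ exists and is continuous on $\overline\Omega$. Hence every convergent subsequence of $\{u^\eps\}$ has the same limit $u$, and precompactness then forces the full family $u^\eps\to u$ uniformly on $\overline\Omega$. I expect the main obstacle to be Step 1 near the boundary: controlling the exit time and exit location of the discrete process under the optimal strategy of the minimizing player when the operator is degenerate (only one eigenvalue is controlled, not the full Laplacian), which is precisely why condition $(F)$ is formulated with the slab $T_\lambda$ — the minimizing player chooses the subspace $T$ from $(F_j)$ and confines the walk to $T_\lambda$, and one must carefully track the interplay between the $\eps$-scale of the steps and the $\lambda,\theta,\delta$ parameters in $(F_j)$ to get a genuine (subsequence-independent) modulus of continuity at the boundary. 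The interior estimate and the viscosity-solution verification are comparatively routine given the min–max identity for $\lambda_j$.
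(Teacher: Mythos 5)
Your proposal matches the paper's proof strategy very closely: uniform boundedness of $u^\eps$ by $\|g\|_\infty$, an Arzel\`a--Ascoli-type lemma giving precompactness from asymptotic equicontinuity, a coupling/translation argument for the interior estimate, a martingale--optional-stopping estimate in a direction supplied by condition $(F)$ for the boundary estimate (the paper splits this exactly as you suggest: $(F_j)$ lets Player~I take $S=T$ to confine the walk for the upper bound, while $(F_{N-j+1})$ lets Player~II pick $v\in S\cap T$ by the dimension count $\dim T+\dim S=N+1>N$ for the lower bound), the Taylor-expansion/test-function passage to the limit using the min--max identity for $\lambda_j$, and finally uniqueness from the Harvey--Lawson comparison principle to upgrade subsequential convergence to full convergence. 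One caution about your Step~3: do not justify existence and continuity of $u$ by appealing to Theorem~\ref{teo.main} through the implication $(F)\Rightarrow(G)$, because in this paper that implication (Section~\ref{sect-HFG}) is itself deduced from the present convergence theorem, so that route would be circular; as you yourself note this detour is unnecessary, since Steps~1--2 already produce a continuous subsequential limit solving the Dirichlet problem, and the comparison principle alone forces all subsequential limits to coincide.
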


We regard condition ($F$) as a geometric 
way to state \eqref{cond.Geo} without assuming that the boundary is smooth.
In section \ref{sect-HFG}, we discuss the relation within the different conditions on the boundary
in detail, we have that
$$
{\rm (H)}   \Rightarrow  {\rm (F)}    \Rightarrow {\rm (G)}.
$$

With the dynamic programming principle (DPP)
in mind we can obtain the following asymptotic mean value characterization of 
viscosity solutions to \eqref{1.1}. For the precise meaning of satisfying an asymptotic expansion
in the viscosity sense we refer to \cite{MPR} and Section \ref{sect.asymp}.

\begin{theorem}\label{eq.mena.value.charact}
Let $u$ be a continuous
function in a domain $\Omega\subset\mathbb{R}^N$.  
The asymptotic expansion 
$$
u(x) = \min_{dim(S)=j} \max_{
v\in S, |v|=1} \left\{ \frac{1}{2} u^\epsilon (x + \epsilon v) + \frac{1}{2} u^\epsilon (x - \epsilon v)
\right\} + o(\epsilon^2), \ \mbox{as }\epsilon \to 0,
$$
holds in the viscosity sense
if and only if
$$
\lambda_j ( D^2 u )  = 0
$$
in the viscosity sense. 
\end{theorem}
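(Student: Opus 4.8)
The plan is to follow the standard Taylor-expansion argument used to relate asymptotic mean-value expansions to second-order operators, as in \cite{MPR}, adapted to the min-max structure coming from $\lambda_j$. Recall first the variational characterization of the eigenvalue: for a symmetric matrix $A$,
\[
\lambda_j(A) = \min_{\dim(S)=j}\ \max_{v\in S,\,|v|=1}\ \langle A v, v\rangle .
\]
Thus, if $\phi$ is a smooth test function, a second-order Taylor expansion at $x$ gives
\[
\frac{1}{2}\phi(x+\eps v)+\frac{1}{2}\phi(x-\eps v)-\phi(x) = \frac{\eps^2}{2}\,\langle D^2\phi(x)v,v\rangle + o(\eps^2),
\]
uniformly for unit vectors $v$. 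Taking $\max$ over $v\in S$ and then $\min$ over $j$-dimensional subspaces $S$ — and using that the $o(\eps^2)$ is uniform, so it survives the min-max — one obtains
\[
\min_{\dim(S)=j}\max_{v\in S,|v|=1}\Big\{\tfrac12\phi(x+\eps v)+\tfrac12\phi(x-\eps v)\Big\} - \phi(x) = \frac{\eps^2}{2}\,\lambda_j(D^2\phi(x)) + o(\eps^2).
\]
This identity is the analytic heart of the matter: it says the discrete min-max operator, suitably normalized, converges to $\lambda_j(D^2\phi)$ pointwise on smooth functions.

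With this expansion in hand, I would argue both implications in the viscosity framework. Suppose $u$ satisfies $\lambda_j(D^2u)=0$ in the viscosity sense; to check the asymptotic expansion in the viscosity sense, take a smooth $\phi$ touching $u$ from below at $x$ (the other case is symmetric). Since $\lambda_j(D^2\phi(x))\ge 0$ by the viscosity subsolution property, the displayed expansion gives that the right-hand side of the asymptotic formula, with $u$ replaced by $\phi$, is $\ge \phi(x)+o(\eps^2)$; because $\phi\le u$ with equality at $x$, this transfers to the required inequality for $u$, giving one half of the expansion in the viscosity sense. The reverse direction is the same computation read backwards: if $u$ satisfies the asymptotic expansion in the viscosity sense and $\phi$ touches $u$ from below at $x$, feeding $\phi$ into the expansion forces $\lambda_j(D^2\phi(x))\ge 0$, i.e. $u$ is a viscosity subsolution; supersolution is symmetric. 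One should be slightly careful about the precise definition of ``satisfying the expansion in the viscosity sense'' — following \cite{MPR}, it is phrased in terms of inequalities for the limsup/liminf of the normalized increments against test functions — but once that definition is unwound the argument is exactly the Taylor expansion above.

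The main obstacle, and the only point requiring genuine care, is the uniformity of the error term under the min-max and the possible loss of continuity of the optimal subspace $S$ in $v$ and $x$. Concretely, one must justify that
\[
\sup_{\dim(S)=j}\ \sup_{v\in S,|v|=1}\ \Big| \tfrac12\phi(x+\eps v)+\tfrac12\phi(x-\eps v)-\phi(x) - \tfrac{\eps^2}{2}\langle D^2\phi(x)v,v\rangle\Big| = o(\eps^2);
\]
this follows from the $C^2$ regularity of $\phi$ (Taylor's theorem with uniform modulus of continuity for $D^2\phi$ on a neighborhood of $x$), and the compactness of the set of unit vectors, so the min-max of an $o(\eps^2)$-perturbation of $v\mapsto\langle D^2\phi(x)v,v\rangle$ equals $\lambda_j(D^2\phi(x))$ up to $o(\eps^2)$. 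A secondary technical point is that the statement writes $u^\eps$ inside the expansion whereas it should be read as the expansion operator applied to $u$ itself (the DPP notation being reused); I would state the expansion cleanly with $u$ on the right-hand side before proving it. No deep new idea beyond Theorem~\ref{teo.dpp}'s computations is needed; this theorem is essentially the infinitesimal version of the DPP, and the proof is a localization of the same algebra.
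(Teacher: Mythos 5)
Your overall strategy is the same as the paper's: write the min-max characterization $\lambda_j(A)=\min_{\dim S=j}\max_{v\in S,|v|=1}\langle Av,v\rangle$, Taylor-expand a $C^2$ test function to second order, observe that the odd (gradient) term cancels, and conclude that the normalized discrete min-max operator applied to $\phi$ equals $\tfrac{\eps^2}{2}\lambda_j(D^2\phi(x))+o(\eps^2)$. Your emphasis on uniformity of the $o(\eps^2)$ under the min-max (via uniform continuity of $D^2\phi$ near $x$ and compactness of the unit sphere) is correct and is the point the paper leaves implicit, and your observation that the $u^\eps$ in the theorem statement should be $u$ is a genuine typo-catch.

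However, your sign bookkeeping in the viscosity argument is inverted relative to the paper's definitions, and the chain of inequalities does not close as written. In Definition~\ref{def.sol.viscosa.1}, when $\phi$ touches $u_*$ from \emph{below} (i.e.\ $u_*-\phi$ has a strict minimum with $u_*(x)=\phi(x)$), the conclusion is $\lambda_j(D^2\phi(x))\leq 0$, not $\geq 0$; the operator $\lambda_j$ is monotone increasing in the Hessian, so the standard degenerate-elliptic convention flips accordingly. Correspondingly, Definition~\ref{def.sol.viscosa.asymp}(1) asks, for $\phi$ touching from below, that
\[
0\ \geq\ -\phi(x)+\min_{\dim S=j}\max_{v\in S,|v|=1}\Big\{\tfrac12\phi(x+\eps v)+\tfrac12\phi(x-\eps v)\Big\}+o(\eps^2),
\]
that is $\phi(x)\geq\min\max\{\cdots\}+o(\eps^2)$. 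Your argument deduces $\lambda_j(D^2\phi(x))\geq 0$ and hence $\min\max\{\cdots\}\geq\phi(x)+o(\eps^2)$ --- the reverse inequality from what Definition~\ref{def.sol.viscosa.asymp}(1) requires --- and then asserts ``this transfers to the required inequality,'' which it does not. The fix is mechanical (use $\lambda_j(D^2\phi(x))\leq 0$ for test functions touching from below, and $\geq 0$ for those touching from above), after which your argument reproduces the paper's proof. But as written the inequalities point the wrong way, so the proof needs this correction before it is valid.
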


Our results can be easily extended to cover equations of the form
\begin{equation} \label{eq.sumas}
\sum_{i=1}^k \alpha_i \lambda_{j_i} =0
\end{equation}
with $\alpha_1 + ...+ \alpha_k =1$, $\alpha_i >0$ and
$\lambda_{j_1}\leq ... \leq \lambda_{j_k}$ any choice of $k$ 
eigenvalues of $D^2 u$ (not necessarily consecutive ones). 
In fact, once we fixed indexes $j_1,...,j_k$, we can just choose at random 
(with probabilities $\alpha_1,...,\alpha_k$) which game we play at each step 
(between the previously described games
that give $\lambda_{j_i}$ in the limit). In this case the DPP
reads as
$$
u^\epsilon (x) = \sum_{i=1}^k \alpha_i \left(
\inf_{
{dim}(S)=j_i
} \sup_{
v\in S, |v|=1}
\left\{ \frac{1}{2} u^\epsilon (x + \epsilon v) + \frac{1}{2} u^\epsilon (x - \epsilon v)
\right\} \right).
$$
Passing to the limit as $\epsilon \to 0$ we obtain a solution to \eqref{eq.sumas}.

In particular, we can handle equations of the form
\begin{equation*}
P^+_k (D^2 u) := \sum_{i=N-k+1}^N
\lambda_i(D^2 u)  = 0 ,
\quad
\mbox{and} \quad
P^-_k (D^2 u) := \sum_{i=1}^{k}
\lambda_i(D^2 u)  = 0,
\end{equation*}
or a convex combination of the previous two
\begin{equation*}
P^\pm_{k,l,\alpha} (D^2 u) := \alpha \sum_{i=N-k+1}^N
\lambda_i(D^2 u) + (1-\alpha) \sum_{i=1}^l
\lambda_i(D^2 u)   = 0.
\end{equation*}

These operators appear 
in \cite{Birin,Birin2,HL1,HL2} and in \cite{Sha,Wu} with connections
with geometry. See also \cite{caffa} for uniformly elliptic equations that
involve eigenvalues of the Hessian.

\begin{remark}{\rm
We can interchange the roles of Player I and Player II. In fact, 
consider a version of the game where 
the player who chooses the subspace $S$ of dimension $j$
is the one seeking to maximize the expected payoff
while the one who chooses the unitary vector 
wants to minimize the expected payoff.
In this case the game values will converge to a solution of the equation
$$
\lambda_{N-j+1} ( D^2 u )  = 0.
$$
Notice that the geometric condition on $\Omega$, ($F_j$) and ($F_{N-j+1}$), is 
also well suited to deal with this case.
}
\end{remark}

\medskip

The paper is organized as follows:
in Section \ref{sect-prelim} we collect
some preliminary results and include the definition of viscosity solutions;
in Section \ref{sect-concave}
we obtain the geometric interpretation of solutions to \eqref{1.1} stated
in Theorem \ref{teo.convex.envelope.k.intro};
in Section \ref{sect-conditions}
we prove Theorem~\ref{teo.main};
in Section \ref{sect-games} we prove our main results concerning the game,
Theorem \ref{teo.dpp} and Theorem \ref{teo.converge}; in Section \ref{sect-HFG}
we discuss the relation between the different geometric conditions on $\Omega$ and, finally,
in Section \ref{sect.asymp} we prove the asymptotic mean value characterization for 
solutions to \eqref{1.1}, Theorem \ref{eq.mena.value.charact}.

\section{Preliminaries} \label{sect-prelim}

We begin by stating the usual definition of a viscosity solution to \eqref{1.1}.
Here and in what follows $\Omega$ is a bounded domain in $\R^N$. We refer to
\cite{CIL} for general results on viscosity solutions.

First, let us recall the definition of the
lower semicontinuous envelope, $u_*$, and the upper semicontinuous envelope, $u^*$, of $u$, that is,
\[
u_*(x)=\sup_{r>0}\inf_{y\in B_r(x)} u(y)
\quad \text{and} \quad
u^*(x)=\inf_{r>0}\sup_{y\in B_r(x)} u(y).
\]

\begin{definition} \label{def.sol.viscosa.1}
A function  $u:\Omega \mapsto \mathbb{R}$  verifies
$$
\lambda_j ( D^2 u )  = 0
$$
\emph{in the viscosity sense} if
\begin{enumerate}
\item for every $\phi\in C^{2}$ such that $u_*-\phi $ has a strict
minimum at the point $x \in \Omega$
with $u_*(x)=\phi(x)$,
we have
$$
\lambda_j ( D^2 \phi (x) )  \leq 0.
$$

\item for every $ \psi \in C^{2}$ such that $ u^*-\psi $ has a
strict maximum at the point $ x \in {\Omega}$
with $u^*(x)=\psi(x)$,
we have
$$
\lambda_j( D^2 \psi (x) )  \geq 0.
$$
\end{enumerate}
\end{definition}
 
Now, we refer to \cite{HL1} for the following existence and uniqueness result for viscosity
solutions to \eqref{1.1}.

\begin{theorem}[\cite{HL1}] Let $\Omega$ be a smooth bounded domain in $\mathbb{R}^N$.
Assume that condition \eqref{cond.Geo} holds at every point on $\partial \Omega$. Then, for every
$g\in C(\partial \Omega)$, 
the problem
$$
\left\{
\begin{array}{ll}
\lambda_j(D^2 u)  = 0 , \qquad & \mbox{ in } \Omega, \\[5pt]
u=g , \qquad & \mbox{ on } \partial \Omega,
\end{array}
\right.
$$
has a unique viscosity solution $u\in C(\overline{\Omega})$.
\end{theorem}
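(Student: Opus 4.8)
The plan is to prove this by Perron's method, whose two ingredients are (i) a comparison principle, valid on any bounded domain and requiring no geometric hypothesis, and (ii) the construction of boundary barriers at every point of $\partial\Om$, which is the only place where \eqref{cond.Geo} is used. Throughout I would view $F(X):=\lambda_j(X)$ as a continuous degenerate-elliptic operator with no zeroth-order term: by the Courant--Fischer formula $\lambda_j(X)=\max_{\dim V=j}\min_{e\in V,\,|e|=1}\langle Xe,e\rangle=\min_{\dim W=N-j+1}\max_{e\in W,\,|e|=1}\langle Xe,e\rangle$, so $X\le Y$ implies $\lambda_j(X)\le\lambda_j(Y)$. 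Uniqueness in $C(\overline{\Om})$ is then immediate from the comparison principle, and the substance of the proof lies in (i) and (ii).

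Step (i), the comparison principle, is where I expect the main difficulty: since $F=0$ is fully degenerate and not strictly monotone, naive doubling of variables only produces test matrices $X\le Y$ with $\lambda_j(X)\ge0\ge\lambda_j(Y)$, forcing $\lambda_j(X)=\lambda_j(Y)=0$ but no contradiction. I would instead use the Dirichlet-duality viewpoint of \cite{HL1}. Put $\mathbf F:=\{X:\lambda_j(X)\ge0\}$; this is a closed set with $\mathbf F+\{P\ge0\}\subseteq\mathbf F$, whose Dirichlet dual $\widetilde{\mathbf F}:=\{X:-X\notin\operatorname{int}\mathbf F\}$ equals $\{X:\lambda_{N-j+1}(X)\ge0\}$, since $-\lambda_j(-X)=\lambda_{N-j+1}(X)$. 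The algebraic heart is the addition inequality $\mathbf F+\widetilde{\mathbf F}\subseteq\{X:\lambda_N(X)\ge0\}$: if $\lambda_j(X)\ge0$ then $X\ge0$ on a subspace of dimension $\ge N-j+1$, if $\lambda_{N-j+1}(Y)\ge0$ then $Y\ge0$ on one of dimension $\ge j$, so these intersect in some $e\neq0$ and $\langle(X+Y)e,e\rangle\ge0$. Lifting this to the viscosity level by the theorem on sums (see \cite{CIL}) — at a strict max of $(u-v)-\psi$ one obtains a test matrix $X$ for $u$ and $-Y$ for $-v$ with $X\in\mathbf F$, $-Y\in\widetilde{\mathbf F}$ and $X-Y\le D^2\psi+o(1)$, whence $\lambda_N(D^2\psi)\ge\lambda_N(X-Y)-o(1)\ge-o(1)$ — shows that, for a subsolution $u$ (USC, $\lambda_j(D^2u)\ge0$) and a supersolution $v$ (LSC, $\lambda_j(D^2v)\le0$), the difference $u-v$ is subaffine, i.e.\ $\lambda_N(D^2(u-v))\ge0$ in the viscosity sense, hence obeys the maximum principle on $\Om$. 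Therefore $u\le v$ on $\partial\Om$ forces $u\le v$ in $\Om$; applied to two solutions with the same datum, this gives uniqueness.

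Step (ii): at each $x_0\in\partial\Om$ and each $\eta>0$ I would construct a continuous supersolution $\bar w_\eta$ on $\overline{\Om}$ with $\bar w_\eta\ge g$ on $\partial\Om$ and $\bar w_\eta(x_0)=g(x_0)+\eta$, and symmetrically a continuous subsolution $\underline w_\eta\le g$ with $\underline w_\eta(x_0)=g(x_0)-\eta$. The building block is $\psi(d)$, where $d$ is the signed distance to $\partial\Om$ (smooth near $\partial\Om$, positive inside) and $\psi$ is smooth with $\psi(0)=0$, $\psi'>0$, $\psi''<0$; then $D^2(\psi(d))=\psi'(d)\,D^2d+\psi''(d)\,\nabla d\otimes\nabla d$, and at $x_0$ the eigenvalues of $D^2d$ are $-\kappa_1(x_0),\dots,-\kappa_{N-1}(x_0)$ in the tangential directions and $0$ in the normal direction. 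Since $\kappa_{N-j+1}(x_0)>0$ forces $\kappa_{N-j+1},\dots,\kappa_{N-1}>0$ (that is $j-1$ curvatures), $D^2(\psi(d))(x_0)$ has at least $j-1$ strictly negative tangential eigenvalues together with the strictly negative normal eigenvalue $\psi''(0)$, so at least $j$ negative eigenvalues and $\lambda_j(D^2(\psi(d)))<0$ near $x_0$. To handle a merely continuous $g$ I would use uniform continuity to pick $\sigma>0$ with $|g-g(x_0)|<\eta$ on $\partial\Om\cap B_\sigma(x_0)$, add a term $C_2|x-x_0|^2$ with $C_2\ge 2\|g\|_{L^\infty(\partial\Om)}/\sigma^2$ (so that $g(x_0)+\eta+C_2|x-x_0|^2\ge g$ on all of $\partial\Om$), and then take $C_1$ large (after $C_2$) so that the $j$ negative eigenvalues above survive the addition of $2C_2 I$; finally, since the minimum of supersolutions is a supersolution, localizing by a minimum with a large constant turns the local object $g(x_0)+\eta+C_1\psi(d)+C_2|x-x_0|^2$ into a global supersolution $\bar w_\eta$. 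The subsolution $\underline w_\eta$ is built the same way, now using $\kappa_j(x_0)>0$ (equivalently, by applying the construction to the operator $\lambda_{N-j+1}$ with datum $-g$ and negating); this explains why both curvature conditions in \eqref{cond.Geo} are needed.

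Finally I would run Perron's method. Set $u:=\sup\{w:\ w\text{ a bounded USC subsolution in }\Om,\ w\le g\text{ on }\partial\Om\}$, extended by $g$ on $\partial\Om$; the family is nonempty (large negative constants, and the $\underline w_\eta$). The standard Perron lemmas give that $u^*$ is a subsolution and that $u_*$ is a supersolution — if $u_*$ failed the supersolution inequality at an interior point, a small local bump would yield a subsolution strictly exceeding $u$ there, contradicting maximality. Comparison against the barriers gives $\underline w_\eta\le u\le\bar w_\eta$ in $\Om$ for every $\eta>0$, hence $\limsup_{x\to x_0}u^*(x)\le g(x_0)+\eta$ and $\liminf_{x\to x_0}u_*(x)\ge g(x_0)-\eta$ at each $x_0\in\partial\Om$; letting $\eta\to0$ yields $u^*=u_*=g$ on $\partial\Om$. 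The comparison principle of step (i), applied to the subsolution $u^*$ and the supersolution $u_*$ (which now satisfy $u^*\le u_*$ on $\partial\Om$), gives $u^*\le u_*$ in $\Om$, so $u:=u^*=u_*\in C(\overline{\Om})$ is a viscosity solution with $u=g$ on $\partial\Om$; uniqueness was recorded in step (i). The hard part, as indicated, is step (i): everything else is standard once the degeneracy of $F=0$ is circumvented by the duality/addition structure.
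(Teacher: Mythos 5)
Your proposal is correct, and it is essentially a reconstruction of the proof in Harvey--Lawson \cite{HL1} -- which is all the paper itself offers for this statement: the theorem is quoted from \cite{HL1}, and the paper only supplies, ``for the reader's convenience,'' the barrier construction. Two points of comparison. For the comparison principle you invoke the Dirichlet-duality/subaffine mechanism ($\mathbf F+\widetilde{\mathbf F}\subseteq\{\lambda_N\ge0\}$, so a subsolution minus a supersolution is subaffine and obeys the maximum principle); this is precisely the route of \cite{HL1}, and it is indeed the one genuinely nontrivial point, since, as you observe, naive doubling of variables yields no contradiction for this fully degenerate operator. Your viscosity-level implementation via the theorem on sums differs cosmetically from Harvey--Lawson's own treatment of upper semicontinuous $\mathbf F$-subharmonics, but it is sound: testing the Ishii matrices against diagonal vectors $(\xi,\xi)$ kills the penalization term and gives $X_1+X_2\le D^2\psi+O(1/\alpha)$, after which the addition inequality applies. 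For the barriers, the paper's Section 2 uses the explicit quadratic $x_N-\tfrac12\sum_i a_ix_i^2-\tfrac12\,b\,x_N^2$ with $a_i=\kappa_i-\eta$ and $b=\kappa_{N-j+1}-\eta$, whereas you use $\psi(d)$ with $d$ the signed distance; both rest on exactly the same eigenvalue count ($j-1$ negative tangential eigenvalues coming from $\kappa_{N-j+1},\dots,\kappa_{N-1}>0$ plus one negative ``normal'' eigenvalue forces $\lambda_j<0$), and the distance-function version has the mild advantage of being defined and $C^2$ in a full interior collar of the boundary without the Taylor-expansion bookkeeping the paper carries out to verify positivity of the quadratic barrier on $\Omega\cap B_r$. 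The Perron scaffolding and the use of $\kappa_j>0$ for the lower barrier (via $-u$ and $\lambda_{N-j+1}$) are standard and correctly assembled.
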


We remark that for the equation $\lambda_j(D^2 u)  = 0$ there is a comparison principle.
A viscosity supersolution $\overline{u}$ (a lower semicontinuous function that verifies {\it (1)} 
in Definition \ref{def.sol.viscosa.1}) and  viscosity subsolution $\underline{u}$
(an upper semicontinuous function that verifies {\it (2)} 
in Definition \ref{def.sol.viscosa.1})
that are ordered as $\underline{u} \leq \overline{u}$ on $\partial \Omega$ are also ordered
as $\underline{u} \leq \overline{u}$ 
inside $\Omega$. This comparison principle holds without  assuming condition (H).

Condition \eqref{cond.Geo} allows us to construct a barrier at every point of the boundary.
This implies the continuity up to the boundary as stated above.
For the reader's convenience, let us include some details on the constructions of such barriers.
This calculations may help the reader to understand the interplay between the different conditions 
on the boundary of $\Omega$ that will be discussed in Section \ref{sect-HFG}.

For a given point 
 on the boundary (that we assume to be $x=0$)
we take coordinates according to $x_N$ in the direction of the normal vector 
and $(x_1,...,x_{N-1})$ in the tangent plane
in such a way that the main curvatures of the boundary $\kappa_1 \leq ... \leq \kappa_{N-1}$ corresponds to the directions
$(x_1,...,x_{N-1})$. That is, locally the boundary of $\Omega$ can be described as
$$
x_N = f(x_1,...,x_{N-1})
$$
with
$$
f(0,...,0) =0, \qquad \nabla f (0,...,0) =0.
$$
That is, locally we have that the boundary of $\Omega$ is given by 
$$
x_N- \frac12 \sum_{i=1}^{N-1} \kappa_i x_i^2 = o \left(\sum_{i=1}^{N-1}  x_i^2\right),
$$
and
$$
\begin{array}{l}
\displaystyle 
\Omega \cap B_r(0)  
= \Big\{ (x_1,...,x_N)\in B_r(0) \, : \, x_N - f(x_1,...,x_{N-1})  > 0 \Big\} \\[10pt]
\displaystyle \qquad 
= \left\{ (x_1,...,x_N)\in B_r(0) \, : \, x_N- \frac12 \sum_{i=1}^{N-1} \kappa_i x_i^2 > o \left(\sum_{i=1}^{N-1}  x_i^2\right) 
\right\}.
\end{array}
$$
for some $r>0$.

Now we take as candidate for a barrier a function of the form
$$
\overline{u}(x_1,...,x_N) = x_N- \frac12 \sum_{i=1}^{N-1} a_i x_i^2 - \frac12 bx_N^2,
$$
with  
$$
a_i = \kappa_i - \eta  \qquad \mbox{ and } \qquad b = \kappa_{N-j+1} -\eta.
$$
We have that
$$
D^2 (\overline{u}) =  \left( \begin{array}{ccccc}
-a_1 &  \dots & 0 & 0 \\
\vdots & \ddots &  & \vdots \\
0  &  & - a_{N-1} & 0 \\
 0 & \dots & 0 & - b 
\end{array} \right),
$$
and then the eigenvalues of $D^2 (\overline{u})$ are given by
\[
\begin{split}
\lambda_1 = -\kappa_{N-1}+\eta \leq \cdots \leq \lambda_{j-1} &= -\kappa_{N-j+1}+\eta= \\
 \lambda_j &= -\kappa_{N-j+1}+\eta \leq \cdots \leq \lambda_N = -\kappa_1+\eta. 
\end{split}
\]
We asked that condition \eqref{cond.Geo} holds, that implies, in particular, that
$$
\kappa_{N-j+1} >0,
$$
and therefore,
$$
\lambda_j (D^2 \overline{u}) = -\kappa_{N-j+1}+\eta <0
$$
for $\eta>0$ small enough. 

We also have 
$$
\overline{u}(x_1,...,x_N) >0 \qquad \mbox{ for } (x_1,...,x_N) \in \Omega \cap B(0,r)
$$
for $r$ small enough. To see this fact we argue as follows:
$$
\begin{array}{l}
\displaystyle
\overline{u}(x_1,...,x_N) = x_N- \frac12 \sum_{i=1}^{N-1} a_i x_i^2 - \frac12 bx_N^2 \\[10pt]
\displaystyle \qquad = x_N- f(x_1,...,x_{N-1}) + f(x_1,...,x_{N-1}) - \frac12 \sum_{i=2}^{n} a_i x_i^2 - \frac12 bx_1^2\\[10pt]
\displaystyle \qquad \geq  f(x_1,...,x_{N-1}) - \frac12 \sum_{i=1}^{N-1} \kappa_i x_i^2 + \frac{\eta}{2}\sum_{i=1}^{N} x_i^2  - 
\frac12 \kappa_{N-j+1}  x_N^2\\[10pt]
\displaystyle \qquad \geq   \frac{\eta}{2} \sum_{i=1}^{N} x_i^2  - 
\frac12 \kappa_{N-j+1}  x_N^2 + o\left(\sum_{i=1}^{N-1} x_i^2\right) .
\end{array}
$$
Since we assumed that $\kappa_{N-j+1} >0$ we have
\begin{align*}
\overline{u}(x_1,...,x_N)  
&\geq  \frac{\eta}{2} \sum_{i=1}^{N} x_i^2  - 
\frac12 \kappa_{N-j+1}  x_N^2 + o(\sum_{i=1}^{N-1} x_i^2)
\\
&\geq  \frac{\eta}{2} \sum_{i=1}^{N} x_i^2  - 
\frac12 \kappa_{N-j+1}  \left(\frac12 \sum_{i=1}^{N-1} \kappa_i x_i^2\right)^2 + o\left(\sum_{i=1}^{N-1} x_i^2\right)
\\
&\geq  \frac{\eta}{2} \sum_{i=1}^{N} x_i^2  - 
C \left(\sum_{i=1}^{N-1}  x_i^2\right)^2 + o\left(\sum_{i=1}^{N-1} x_i^2\right)>0
\end{align*}
for $(x_1,...,x_N) \in \Omega \cap B(0,r)$
with $r$ small enough. We also have that $\overline{u}(0)=0$
and at a point on $\partial\Omega\setminus\{0\}$
\[
\begin{split}
\overline{u}(x_1,...,x_N) &= x_N- \frac12 \sum_{i=1}^{N-1} a_i x_i^2 - \frac12 bx_N^2\\
&=\frac{\eta}{2} \sum_{i=1}^{N} x_i^2 + o\left(\sum_{i=1}^{N-1} x_i^2\right) >0.
\end{split}
\]

When looking for a subsolution we can do an analogous construction.
In this case we will use the condition $\kappa_j>0$.

\section{The geometry of convex/concave envelopes and the equation $\lambda_j =0$}
\label{sect-concave}

Let us describe a geometric interpretation of being a solution (the largest) to the equation
$$
\lambda_j( D^2 u )  = 0, \qquad \mbox{in } \Omega 
$$
with 
$u \leq g$ on $\partial \Omega$.

We begin with two special cases of Theorem \ref{teo.convex.envelope.k.intro}.

\subsection{$j=1$ and the convex envelope.} Let us start with the case $j=1$. We let
$H_1$ be the set of functions $v$ such that
$$v \leq g \qquad \mbox{on } \partial \Omega,$$
and have the following property: for every segment $D=(x_1,x_2)\subset \Omega$ it holds that 
$$
v \leq z \qquad \mbox{ in } D
$$
where $z$ is the linear function in $D$ with boundary values
$v|_{\partial D}$. In this case, the graph of $z$ is just the segment that joins
$(x_1, v(x_1))$ with $(x_2, v(x_2))$ and then we get
$$
v (t x_1 + (1-t) x_2) \leq t v(x_1) + (1-t) v(x_2) \qquad t \in (0,1).
$$
That is, $H_1$ is the set of convex functions in $\Omega$ that are less or
equal that $g$ on $\partial \Omega$.

Now we have

\begin{theorem} \label{teo.convex.envelope} Let
$$
u(x) = \sup_{v \in H_1} v(x).
$$
It turns out that $u$ is the largest viscosity solution to 
$$
\lambda_1( D^2 u )  = 0 \qquad \mbox{in } \Omega,
$$
with $u \leq g$ on $\partial \Omega$.
\end{theorem}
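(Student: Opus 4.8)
By the discussion preceding the statement, $u$ is the convex envelope of $g$ relative to $\Omega$, and the plan is to prove four things: (i) $u$ is finite and convex along every segment contained in $\Omega$, hence locally Lipschitz and continuous in $\Omega$; (ii) $u^*\le g$ on $\partial\Omega$; (iii) $u$ solves $\lambda_1(D^2u)=0$ in the viscosity sense; and (iv) every viscosity subsolution $w$ of $\lambda_1(D^2w)=0$ with $w\le g$ on $\partial\Omega$ lies in $H_1$, hence $w\le u$. Together these give that $u$ is the largest viscosity solution with the prescribed boundary inequality. (For $j=1$ this is closely related to the Oberman--Silvestre description of the convex envelope \cite{OS}; the point is to carry it out for a general bounded, possibly non-convex, domain with the convexity-along-segments notion.)

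For (i) I would argue as follows. The constant $\min_{\partial\Omega}g$ belongs to $H_1$, so $H_1\neq\emptyset$; conversely, given $v\in H_1$ and $x\in\Omega$, prolonging the segment through $x$ in a fixed direction and its opposite until it first leaves $\Omega$ produces a segment of $\Omega$ with both endpoints on $\partial\Omega$ through $x$, so convexity of $v$ along it bounds $v(x)$ above by any affine function that majorizes $g$ on $\partial\Omega$; hence $u$ is finite, and as a pointwise supremum of segment-convex functions it is segment-convex, hence locally Lipschitz and continuous in $\Omega$. For (ii), fix $\xi\in\partial\Omega$; for $x\in\Omega$ near $\xi$ the segment from $x$ to $\xi$ first leaves $\Omega$ at a point $\xi'\in\partial\Omega$ with $|\xi'-\xi|\le|x-\xi|$, and extending it past $x$ gives a segment of $\Omega$ with endpoints $\xi',\xi''\in\partial\Omega$ on which $x$ is close to $\xi'$; convexity of each $v\in H_1$ along this segment together with $v\le g$ on $\partial\Omega$, the bound $g\le M$, and continuity of $g$ yields $\limsup_{x\to\xi}u(x)\le g(\xi)$.

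The subsolution part of (iii) (condition (2) of Definition~\ref{def.sol.viscosa.1}) is immediate from convexity: if $\psi\in C^2$ touches $u$ from above at $x_0\in\Omega$ and $p$ is a subgradient of $u$ at $x_0$, then $\psi(y)-\psi(x_0)-p\cdot(y-x_0)\ge u(y)-u(x_0)-p\cdot(y-x_0)\ge 0$ near $x_0$, with equality at $x_0$, so $D^2\psi(x_0)\ge 0$ and in particular $\lambda_1(D^2\psi(x_0))\ge 0$. For (iv) I would use that condition (2) applied to $w$ forces every $C^2$ function touching $w^*$ from above to have nonnegative Hessian at the contact point, which (restricting to segments) is the classical viscosity characterization of convexity; thus $w^*$ is segment-convex, and if $w\le g$ on $\partial\Omega$ then $w^*\in H_1$, whence $w\le w^*\le u$.

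The hard part will be the supersolution inequality in (iii) (condition (1)), whose proof rests on a gluing construction. Suppose $\phi\in C^2$ is such that $u-\phi$ has a strict local minimum at $x_0\in\Omega$ with $u(x_0)=\phi(x_0)$, and assume for contradiction that $\lambda_1(D^2\phi(x_0))>0$, i.e.\ $D^2\phi(x_0)$ is positive definite. Pick $\rho>0$ so small that $\phi$ is strictly convex on $\overline{B_\rho(x_0)}\subset\subset\Omega$ and $u-\phi\ge m>0$ on $\partial B_\rho(x_0)$ (possible by strictness of the minimum). Set $\tilde\phi=\phi+\tfrac m2$ on $B_\rho(x_0)$ and let $w=\max\{u,\tilde\phi\}$ on $B_\rho(x_0)$ and $w=u$ on $\Omega\setminus B_\rho(x_0)$. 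Since $\tilde\phi<u$ on $\partial B_\rho(x_0)$, the set $\{\tilde\phi>u\}$ is a compact subset of $B_\rho(x_0)$, so $w$ coincides with $u$ near $\partial B_\rho(x_0)$, is continuous on $\Omega$, satisfies $w=u\le g$ on $\partial\Omega$, and $w(x_0)=\phi(x_0)+\tfrac m2>u(x_0)$. The crucial claim is that $w$ is still segment-convex: along any segment $\ell\subset\Omega$ the set $\ell\cap B_\rho(x_0)$ is an interval $I$ (possibly degenerate), on $\ell$ the function $w$ equals the convex function $\tilde\phi|_\ell$ on the relatively open set $\{\tilde\phi>u\}\cap I$ (whose closure lies in the interior of $I$) and equals the convex function $u|_\ell$ off it, and at each endpoint of a component of $\{\tilde\phi>u\}$ the two pieces agree while the relevant one-sided derivative of the winning piece dominates that of the other --- so the standard pasting lemma for convex functions shows $w|_\ell$ is convex. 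Hence $w\in H_1$ and $w(x_0)>u(x_0)=\sup_{v\in H_1}v(x_0)$, a contradiction; therefore $\lambda_1(D^2\phi(x_0))\le 0$. It is exactly here that strictness of the minimum enters, via the strict separation $\tilde\phi<u$ on $\partial B_\rho(x_0)$, which is what keeps the modified competitor convex along every segment of $\Omega$ and not just those contained in the ball.
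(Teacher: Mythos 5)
Your argument is correct and follows essentially the same strategy as the paper's proof of the general Theorem~\ref{teo.convex.envelope.k.intro}, of which this statement is the $j=1$ case: the subsolution inequality comes directly from convexity of the competitors, and the supersolution inequality is obtained by contradiction through a local gluing of $u$ with the lifted test function $\phi+\text{const}$. You supply details the paper leaves implicit for $j=1$ (continuity of $u$, the pasting argument showing the glued competitor is segment-convex, and the verification that every subsolution below $g$ lies in $H_1$, which justifies the ``largest'' claim), but the core ideas coincide.
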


Notice that $u$ is just the convex envelope of $g$ in $\Omega$
and that this function is known to be twice differentiable almost everywhere
inside $\Omega$, \cite{Alex}.

\subsection{$j=N$ and the concave envelope.} Similarly, when one deals with $j=N$, we consider
$$
\lambda_N ( D^2 u )  = 0 \qquad \mbox{in } \Omega,
$$
with $u = g$ on $\partial \Omega$. We get that $v=-u$ is a solution to 
$$
\lambda_1 ( D^2 v )  = 0 \qquad \mbox{in } \Omega,
$$
with $v = - g$ on $\partial \Omega$. Hence $v=-u$ is the convex envelope of $-g$, that is, $u$ is
the concave envelope of $g$.

\subsection{$1<j<N$ and the convcave/convex envelope in affine spaces.} Let us consider $H_j$ the set of functions $v$ such that 
$$v \leq g \qquad \mbox{on } \partial \Omega,$$
and have the following property: for every $S$ affine 
of dimension $j$ and every $j-$dimensional domain 
$D \subset S \cap \Omega$ it holds that 
$$
v \leq z \qquad \mbox{ in } D
$$
where $z$ is the concave envelope of $v|_{\partial D}$ in $D$. 
Notice that, from our previous case, $j=N$, we have that the equation
for the convex envelope of $g$ in a $j-$dimensional domain $D$
is just $\lambda_j =0$.

Before we proceed with the proof of Theorem \ref{teo.convex.envelope.k.intro} 
we need to show the next lemma.
Notice that for a function $v \in H_j$ it could happen that $v^*$ does not
satisfy $v^* \leq g$ on $\partial \Omega$, nevertheless the main
condition in the definition of the set $ H_j$ still holds for  $v^*$.

\begin{lemma}
\label{lemma-star}
If $v \in H_j$ then for every $S$ affine 
of dimension $j$ and every $j-$dimensional domain 
$D \subset S \cap \Omega$ it holds that 
$$
v^* \leq z \qquad \mbox{ in } D
$$
where $z$ is the concave envelope of $v^*|_{\partial D}$ in $D$.
\end{lemma}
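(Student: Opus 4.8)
The plan is to verify directly that $v^*$ inherits the slice condition defining $H_j$: for every affine subspace $S$ of dimension $j$ and every $j$-dimensional domain $D\subset S\cap\Omega$ we must show $v^*\le z$ on $D$, where $z$, the concave envelope of $v^*|_{\partial D}$ in $D$, is the pointwise infimum of the affine functions $L\colon S\to\R$ with $L\ge v^*$ on $\partial D$. Hence it suffices to prove $v^*(x_0)\le L(x_0)$ for each $x_0\in D$ and each such $L$. I would first stress why one cannot argue by continuity: from $v\in H_j$ (and monotonicity of the concave envelope with respect to the boundary data) one does get $v\le z$ on $D$, with $z$ continuous on the open slice $D$, but this is not enough, because $v^*(x_0)$ is a $\limsup$ of $v$ over a full $N$-dimensional neighborhood of $x_0$ while $z$ lives only on the $j$-dimensional set $D$. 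The remedy is to approach $x_0$ along points sitting on \emph{translated} copies of $S$ and to use the $H_j$ property on those translated slices.

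Concretely: one may assume $\overline D$ is a compact subset of $\Omega$ (the general case is obtained by exhausting $D$ from inside, see below). Fix $x_0\in D$, an affine $L\colon S\to\R$ with $L\ge v^*$ on $\partial D$, a number $c<v^*(x_0)$, and $\eta>0$; the target is $c\le L(x_0)+\eta$. Extend $L$ to an affine function $\widetilde L$ on $\R^N$. Since $v^*-\widetilde L$ is upper semicontinuous on $\Omega$ and $\le 0$ on the compact set $\partial D\subset\Omega$, the open set $\{v^*-\widetilde L<\eta\}$ contains $\partial D$, so there is $\rho>0$ with $N_\rho(\partial D):=\{\,d(\cdot,\partial D)<\rho\,\}\subset\Omega$ and $v\le v^*<\widetilde L+\eta$ on $N_\rho(\partial D)$. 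Choose $x_n\in\Omega$ with $x_n\to x_0$ and $v(x_n)>c$, and set $w_n:=x_n-\pi_S(x_n)$ (where $\pi_S$ is the orthogonal projection onto $S$, so $w_n\to 0$), $S_n:=S+w_n$, $D_n:=D+w_n$. For $n$ large one has $\overline{D_n}\subset\Omega$ (so $D_n$ is an admissible $j$-dimensional domain in $S_n\cap\Omega$), $x_n\in D_n$, and $\partial D_n=\partial D+w_n\subset N_\rho(\partial D)$; hence the affine function $M_n:=(\widetilde L+\eta)|_{S_n}$ on $S_n$ satisfies $M_n>v$ on $\partial D_n$. Being affine, $M_n$ dominates the concave envelope of $v|_{\partial D_n}$ in $D_n$, so the defining property of $H_j$ applied to $D_n$ gives $v\le M_n$ on $D_n$; evaluating at $x_n$ gives $c<v(x_n)\le\widetilde L(x_n)+\eta$. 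Letting $n\to\infty$ and using $\widetilde L(x_n)\to\widetilde L(x_0)=L(x_0)$ (as $x_0\in S$) yields $c\le L(x_0)+\eta$. Sending $\eta\downarrow 0$, then $c\uparrow v^*(x_0)$, and finally taking the infimum over admissible $L$, we obtain $v^*(x_0)\le z(x_0)$.

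The only genuine difficulty is the dimension mismatch noted above: the inequality cannot be propagated inside the slice $D$ but must be carried along translates of $S$, and the mechanism that makes this work is the uniform bound $v<\widetilde L+\eta$ on a full $\R^N$-neighborhood of $\partial D$, supplied by upper semicontinuity of $v^*$ on the compact set $\partial D\subset\Omega$. Everything else --- translating $D$, invoking $H_j$ on $D_n$, and the limit passage --- is routine. For the case in which $\overline D$ meets $\partial\Omega$, one exhausts $D$ by domains $D^{(k)}\subset\subset D$ with $\overline{D^{(k)}}\subset\Omega$, applies the argument above to each $D^{(k)}$, and checks --- again using upper semicontinuity of $v^*$ on the compact set $\overline D$ (recall $\Omega$ is bounded) together with $D^{(k)}\uparrow D$ --- that $\limsup_k z^{(k)}(x_0)\le z(x_0)$ for the corresponding concave envelopes $z^{(k)}$ of $v^*|_{\partial D^{(k)}}$.
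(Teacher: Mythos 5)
Your proof is correct, and it shares the essential mechanism with the paper's argument --- translating the slice $D$ to nearby parallel slices that catch the points $x_n$ with $v(x_n)\to v^*(x_0)$, and invoking the $H_j$ property on the translated slice --- but it is organized differently. The paper proceeds by contradiction: it supposes $v^*(x)>z(x)$, takes $w=z+\eps$, transports $w$ to the translated slice $D_k=(D+x_k)\cap\Omega$, finds a boundary point $y_k\in\partial D_k$ where $v>w_k$, passes to the limit in $y_k$, and uses upper semicontinuity of $v^*$ together with continuity of $w$ to derive a contradiction, treating the subcase $y_k\in\partial\Omega$ by a short separate argument. You instead argue directly, replacing the concave envelope $z$ by the infimum of affine majorants $L$ of $v^*|_{\partial D}$, comparing against the fixed affine function $\widetilde L+\eta$ on each translated slice $D_n=D+w_n$ (a purely ``normal'' translation, since you project out the $S$-component), and you isolate the case $\overline D\subset\Omega$ first, treating the remaining case via an exhaustion $D^{(k)}\subset\subset D$. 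What your route buys: the infimum-of-affine-functions formulation makes the comparison on $\partial D_n$ a one-line consequence of upper semicontinuity on the tubular neighborhood $N_\rho(\partial D)$, and there is no case analysis on the location of a limit boundary point. What it costs: you introduce an extra exhaustion step and the associated claim $\limsup_k z^{(k)}(x_0)\le z(x_0)$, which --- while correct (fix an affine $L\ge v^*$ on $\partial D$, use upper semicontinuity to find, for each $\eta>0$, a tubular neighborhood of $\partial D$ where $v^*<\widetilde L+\eta$, and observe that $\partial D^{(k)}$ eventually enters this neighborhood, giving $z^{(k)}\le\widetilde L+\eta$ on $D^{(k)}$) --- is a nontrivial verification that the paper avoids by cutting the translated slice with $\Omega$ and handling the two limit positions of $y_k$ directly. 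Both arguments are sound; yours is somewhat more modular, the paper's somewhat more compact.
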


\begin{proof}
Suppose not.
Then, there exist $x\in\Omega$, an affine space $S$
of dimension $j$ and a $j-$dimensional domain 
$D \subset S \cap \Omega$ such that $x\in D$ and
$v^*(x)>z(x)$,
where $z:\ol D\to \R$ is the concave envelope of $v^*|_{\partial D}$ in $\ol D$. 
We consider $w=z+\eps$ for $\eps>0$ such that $v^*(x)>w(x)$.
We have that $w(y)>v^*(y)$ for every $y\in\partial D$. 
We suppose, without lost of generality, that $x=0$.

We know that there exists $x_k\in\Omega$ such that $x_k\to 0$ and $v(x_k)\to v^*(0)$.
We let $S_k=x_k+S$ and $D_k=(D+x_k)\cap \Omega$.
Now, we consider $r>0$ such that $B_{r}(0)\cap S\subset D$  and $B_{2r}(0)\subset \Omega$, if $|x_k|<r$
then $B_r(x_k)\subset D_k$.
Hence, $D_k$ is not empty for $k$ large enough, since we have that $x_k\in D_k$.

We consider $w_k:D_k\to \R$ given by $w_k(x)=w(x-x_k)$.
Since $v^*(0)>w(0)=w_k(x_k)$ and $v(x_k)\to v^*(0)$ we know that $v(x_k)>w_k(x_k)$ for $k$ large enough.
Since $w_k$ is concave, $v\in H_j$ and $v(x_k)>w_k(x_k)$ there exists $y_k\in\partial D_k$ such that $v(y_k)> w_k(y_k)$.
As $\partial D_k\subset \partial(D+x_k) \cup \partial\Omega$, by considering a subsequence we can assume that there exists $y$ such that $y_k\to y$, and $y_k\in\partial(D+x_k)$ for every $k$ or $y_k\in\partial\Omega$ for every $k$.

When $y_k\in\partial(D+x_k)$, we have that $y_k-x_k\in\partial D$ and hence $y\in \partial D$.
Since $v(y_k)> w_k(y_k)=w(y_k-x_k)$ and $w$ is continuous we obtain that 
\[
v^*(y)\geq \limsup_k v(y_k) \geq \limsup_k w(y_k-x_k)\geq w(y),
\]
which is a contradiction.

Now we consider the case when $y_k\in\partial\Omega$.
Since $y_k\in\ol D_k$, we have that $y\in\ol D$. 
If $y\in\partial D$ we can arrive to a contradiction as before.
If $y\in D$ then $y\in\Omega$ which is a contradiction since $y_k\in\partial\Omega$ and $y_k\to y$.
\end{proof}

Now, we are ready to prove the main theorem of this section.

\begin{proof} [Proof of Theorem~\ref{teo.convex.envelope.k.intro}]
First, let us show that every $v \in H_j$ is a viscosity 
subsolution to our problem.
In fact, we start mentioning that
$v \leq g$ on $\partial \Omega$. Concerning the equation, 
let $\phi \in C^2$ such that $\phi - v^*$ has a strict minimum at $x_0\in \Omega$ with $v^*(x_0)=\phi (x_0)$
($\phi$ touches $v^*$ from above at $x_0$) and assume, arguing by contradiction, that
$$
\lambda_j( D^2 \phi (x_0) )  < 0. 
$$
Therefore, there are $j$ orthogonal directions $v_1,...,v_j$ such that 
$$
\langle D^2\phi(x)v_i,v_i\rangle < 0.
$$
Notice that $\lambda_1( D^2 \phi (x_0)) \leq ... \leq \lambda_j( D^2 \phi (x_0)) <0$, therefore
the matrix $D^2 \phi (x_0))$ has at least $j$ negative eigenvalues.
Let us call $S $ the affine variety generated by $v_1,...,v_j$ that passes trough $x_0$.

Then we have, for any vector $w \in B_\delta (x_0) \cap S$ not null ($\delta$ small)
$$
v^* (x_0+w)  \leq \phi (x_0+ w) <  \phi (x_0) + \langle \nabla \phi (x_0) , w -x_0 \rangle.
$$
Therefore, we obtain that
$$
w \mapsto \phi (x_0) +  \langle \nabla \phi (x_0) , w-x_0 \rangle - \eps
$$
describes a function $z$ over the ball $B_\delta (x_0) \cap S$ with 
$v^*|_{\partial B_\delta (x_0) \cap S} \leq z|_{\partial B_\delta (x_0) \cap S}$ (for $\eps$ small),
such that
$$
z(x_0)= \phi (x_0)  - \eps<\phi (x_0)=v(x_0).
$$
A contradiction with the result in Lemma~\ref{lemma-star} since $v \in H_j$ and $z$ is linear and hence concave. 

This shows that every $v \in H_j$ is a subsolution and hence
$$
u(x) = \sup_{v \in H_j} v(x)
$$
is also a subsolution. 

Now, to show that $u$ is a supersolution we let $\phi \in C^2$ such that $\phi - u_*$ has a strict 
maximum at $x_0\in \Omega$ with $u_*(x_0)=\phi (x_0)$
($\phi$ touches $u_*$ from below at $x_0$) and assume, arguing by contradiction, that
$$
\lambda_j ( D^2 \phi (x_0) )  > 0. 
$$
Therefore, all the eigenvalues $\lambda_j ( D^2 \phi (x_0) ) \leq ... \leq
\lambda_N ( D^2 \phi (x_0) ) $ of $D^2 \phi (x_0)$ are strictly positive. 
Hence $\phi \in H_j$ in a small neighborhood of $x_0$ (every affine $S$
of dimension $j$
contains a direction $v$ such that $\langle D^2 \phi (x_0) v, v \rangle >0$).

Now, we take (for $\eps$ small) 
$$
\hat{u} (x) = \max \{u (x), \phi (x) + \eps \}
$$
and we obtain a function $\hat{u} \in H_j$ that verifies
$$
\hat{u} (z) = \max \{u (z), \phi (z) + \eps \}
 >  u(z) = \sup_{v \in H_j} v(z)
$$
for some $z$ close to $x_0$,
a contradiction.
\end{proof}

Hence, for a general $j$ we can say that the largest solution to our problem 
$$
\lambda_j( D^2 u )  = 0, \qquad \mbox{in } \Omega 
$$
with $u \leq g$ on $\partial \Omega$, is
the {\bf $j-$dimensional affine convex envelope} of $g$ inside $\Omega$.

\begin{remark} \label{rem.conc/convex} {\rm Notice that we can look at
the equation 
$$
\lambda_j =0
$$
from a dual perspective. 

Now, we consider $V_{N-j+1}$ the set of functions $w$ that are greater or equal than $g$ on $\partial \Omega$ and verify
the following property, for every
$T$ affine of dimension $N-j+1$ and any domain $D \subset T$, $w$ to be bigger or equal than
$z$ for every
$z $ a convex function in $D$ that is less or equal than $w$
on $\partial D$.

Let
$$
u(x) = \inf_{w \in V_{N-j+1}} w(x).
$$
Arguing as before, it turns out that $u$ is the smallest viscosity solution to 
$$
\lambda_j( D^2 u ) = 0, \qquad \mbox{in } \Omega 
$$
with $u \geq g$ on $\partial \Omega$.
}
\end{remark}

\section{Existence of continuous solutions} \label{sect-conditions}

In the previous section we showed 
existence and uniqueness of the largest/smallest viscosity solution to the PDE problem
$$
\lambda_j( D^2 u )  = 0, \qquad \mbox{in } \Omega 
$$
with 
$$u \leq g \ / \ u \geq g, \qquad \mbox{on }\partial \Omega.$$

Our main goal in this section is to show that under condition (G) 
on $\partial \Omega$ these functions coincide and then we have a 
solution $u$ that is continuous up to the boundary. Uniqueness and 
continuity inside $\Omega$
follow from the comparison principle for the equation $\lambda_j( D^2 u )  = 0$
proved in \cite{HL1}. In fact, for a solution that
is continuous on $\partial \Omega$, we have that $u^*$ is a subsolution and 
$u_*$ is a supersolution that verify $u^*=u_* =g$ on $\partial \Omega$
and then the comparison principle gives $u^* \leq u_*$ in $\Omega$. This fact
proves that $u=u^*=u_*$ is continuous.

Let us start by pointing out that when $\Omega$ does not satisfy condition (G)
then we have that ($G_j$) or ($G_{N-j+1}$) does not hold.

If $\Omega$ does not satisfy ($G_j$) then there exist $y\in\partial\Omega$, $r>0$, a sequences of points $x_n\in\Omega$ such that $x_n\to y$ and $S_n$ a sequence of affine subspaces of dimension $j$ such that $x_n\in S_n$ and
\[
S_n\cap\partial \Omega\cap B_r(y)=\emptyset.
\]

\begin{example} \label{ex-halfball} {\rm
The half-ball, that is, the domain $$\Omega=B_{1}(0)\cap \{x_2>0\}$$ in $\mathbb{R}^3$ does not
satisfy (G).
In fact, if we take $y=0\in \partial \Omega$, $r=\frac{1}{2}$, $x_n=(0,\frac{1}{n},0)$ and $S_n=x_n+\langle (1,0,0),(0,0,1)\rangle$ we have
\[
S_n\cap \Omega\cap B_r(y)=\emptyset
\]
for every $n$.

Now, let us show that \eqref{1.1} with $j=2$ 
does not have a continuous solution for a certain continuous boundary datum $g$.
We consider $g$ such that $g(x) \equiv 0$ for $x \in \partial B_{1}(0)\cap \{x_2>0\}$ and $g(0)=1$. 
Then, from our geometric characterization of solutions to the equation
$\lambda_2=0$ we obtain that there is no continuous solution to the Dirichlet problem
in $\Omega$ with datum $u=g$ on $\partial \Omega$. In fact, if 
such solution exists, then it must hold that
$$
u(0,a,0) \leq 0
$$
for every $a>0$. To see this, just observe that $u$ has to be less or equal than
$z\equiv 0$ that is the 
concave envelope of $g$ on the boundary of $\Omega \cap \{ x_2=a\}$.
Now, as $u$ is continuous we must have
$$
0\geq \lim_{a\searrow 0} u(0,a,0) = u(0,0,0) = g(0) =1
$$
a contradiction.}
\end{example}

With this example in mind we are ready to prove our main theorem.

\begin{proof}[Proof of Theorem~\ref{teo.main}]
Our goal is to show that \eqref{1.1} has a continuous solution for every boundary data $g$ if and only if $\Omega$ satisfy ($G$).

Let us start by proving that the condition is necessary.
We assume that $\Omega$ does not satisfies condition (G), hence ($G_j$) or ($G_{N-j+1}$)
does not hold.

If $\Omega$ does not satisfy ($G_j$) then there exist $y\in\partial\Omega$, $r>0$, a sequences of points $x_n\in\Omega$ such that $x_n\to y$ and $S_n$ a sequence of affine subspaces of dimension $j$ such that $x_n\in S_n$ and
\[
S_n\cap\partial \Omega\cap B_r(y)=\emptyset.
\]
We consider a continuous $g$ such that $g(y)=1$ and $g\equiv 0$ in $\partial\Omega\setminus B_r(y)$.
We assume there exists a solution $u$.
We have that $g\equiv 0$ in $S_n\cap\partial \Omega$ and hence $z\equiv 0$ is concave in $S_n\cap \ol\Omega$, we conclude that $u(x_n)\leq 0$ for every $n\in\N$.
Since $u(y)=g(y)=1$ we obtain that $u$ is not continuous.

If $\Omega$ does not satisfy ($G_{N-j+1}$) then we consider a continuous $g$ such that $g(y)=-1$ and $g\equiv 0$ in $\partial\Omega\setminus B_r(y)$.
As before we arrive to a contradiction by considering the characterization given in Remark~\ref{rem.conc/convex}.

We have proved that condition (G) is necessary. Now, let us show that if condition (G) holds we have a continuous solution for every continuous boundary datum $g$. To this end, we consider the largest viscosity solution to the our PDE,
$
\lambda_j( D^2 u )  = 0$ in $ \Omega $
with 
$u \leq g$ on $\partial \Omega$ that was constructed in the previous section.

We fix $y\in\partial\Omega$.
Given $\eps>0$, we want to prove that there exists $\delta>0$ such that $u(x)> g(y)-\eps$ for every $x\in\Omega\cap B_\delta(y)$. 
To prove this,
we will show there exists $\delta>0$ such that for every $x\in \Omega\cap B_\delta(y)$ and for every affine 
space $S$ of dimension $j$ through $x$, if we consider $D=\Omega\cap S$ and 
the concave envelope $z$ of $g|_{\partial D}$ in $D$, it holds that $$z(x)> g(y)-\eps.$$

Since $g$ is continuous, there exists $\ol\delta>0$ such that $|g(x)-g(y)|<\frac{\eps}{2}$ for every $x\in \partial\Omega\cap B_{\ol\delta}(y)$.
We consider $r\leq \ol\delta$ and $\delta>0$ such that condition $(G_j)$ is verified.
Given $x\in \Omega\cap B_\delta (y)$, for every affine space $S$ of dimension $j$ through $x$ there exists 
$v$ of norm one, a direction in $S$ such that
\begin{equation}
\label{condition}
\{x+tv\}_{t\in \R}\cap B_r(y)\cap \partial\Omega\neq\emptyset.
\end{equation}
We can consider the line segment $\ol{AB}$ contained in $\{x+tv\}_{t\in \R}$ such that $x\in\ol{AB}$, the interior of the segment is contained in $\Omega$ and $A,B\in\partial\Omega$.
Due to \eqref{condition} we can assume that $A\in B_r(y)\cap \partial\Omega$.

If $B\in B_{\ol \delta}(y)$, then, recalling that $A\in B_r(y)\subset B_{\ol\delta}(y)$, we have
\[
z(x)\geq\min\{g(A),g(B)\}>g(y)-\frac{\eps}{2}>g(y)-\eps.
\]
If $B\not\in B_{\ol \delta}(y)$, then $\dist(x,B)\geq \ol\delta-\delta$.
We have
\[
\begin{split}
z(x)
&\geq \frac{g(A) \dist(x,B)+g(B)\dist(x,A)}{\dist(x,A)+\dist(x,B)}\\
&\geq g(y)+\frac{(g(A)-g(y)) \dist(x,B)+(g(B)-g(y))\dist(x,A)}{\dist(x,A)+\dist(x,B)}\\
&\geq g(y)-\frac{|g(A)-g(y)| \dist(x,B)}{\dist(x,A)+\dist(x,B)}-\frac{|g(B)-g(y)|\dist(x,A)}{\dist(x,A)+\dist(x,B)}\\
&\geq g(y)-\frac{\eps}{2}-2\max|g|\frac{\dist(x,A)}{\dist(x,B)}.\\
\end{split}
\]
We know that $\dist(x,A)\leq r+\delta$.
If we take $\delta\leq r$, then, for $r$ small enough 
\[
z(x)\geq g(y)-\frac{\eps}{2}-2\max|g|\frac{2r}{\ol\delta-r}> g(y)-\eps
\]
as we wanted.

Analogously, taking into account that $\Omega$ verifies $(G_{N-j+1})$ and employing the characterization given in Remark~\ref{rem.conc/convex}, we can show that there exists $\delta>0$ such that $u(x)< g(y)+\eps$ for every $x\in\Omega\cap B_\delta(y)$.
In this way we obtain that $u$ is continuous on $\partial \Omega$ and hence in the whole $\overline{\Omega}$.
\end{proof}

\begin{example}{\rm
The domain $\Omega=B_{1.4}(0,0,1)\cup B_{1.4}(0,0,-1)$ in $\mathbb{R}^3$ 
that can be seen in Figure~\ref{fig:TwoSpheres} satisfy ($G_2$).
Hence, we have that the equation $\lambda_2=0$ has a solution in such domain.
Observe that the boundary is not smooth.
}
\end{example}

\begin{figure}
\includegraphics[scale=0.0625]{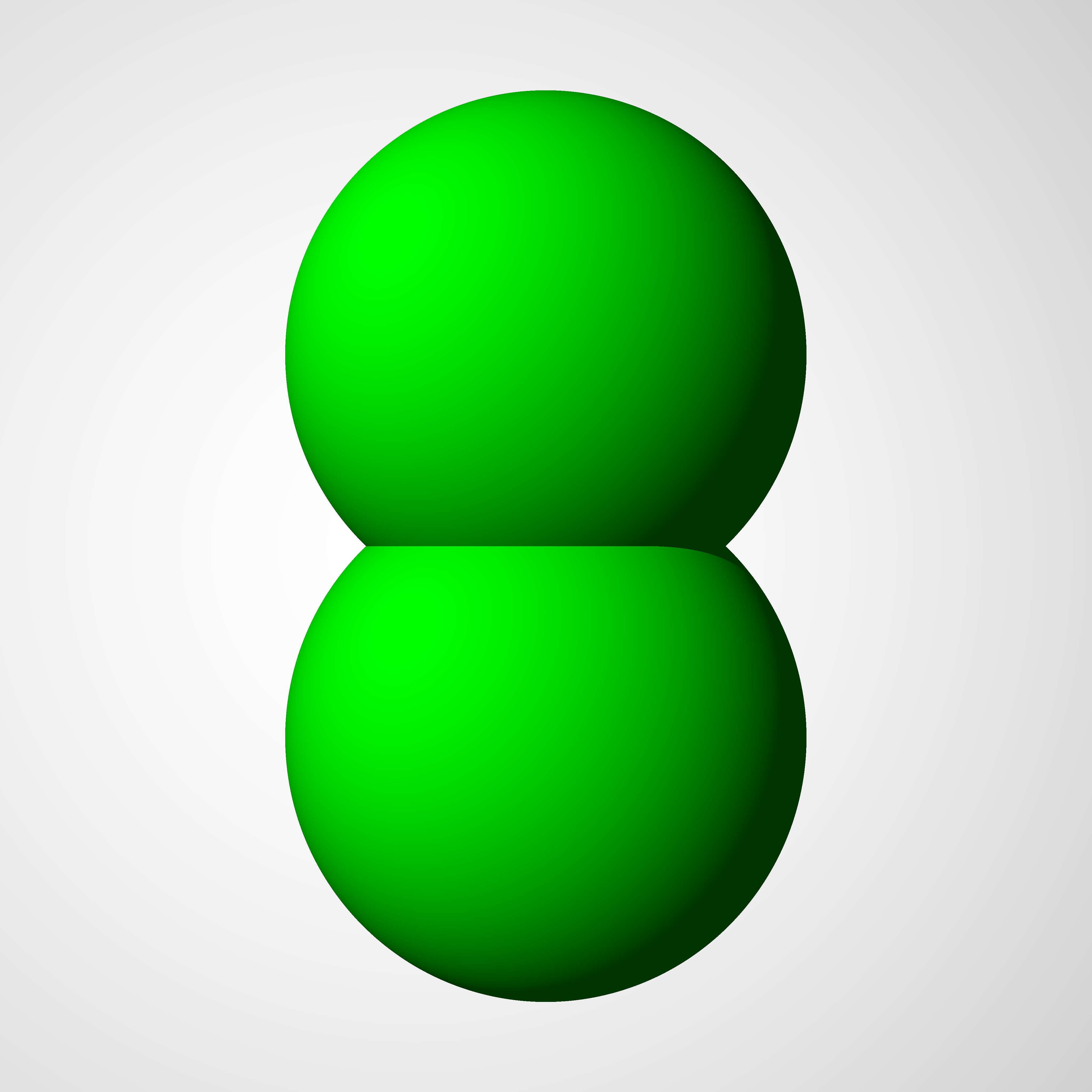}
\label{fig:TwoSpheres}
\caption{The domain $\Omega=B_{1.4}(0,0,1)\cup B_{1.4}(0,0,-1)$.}
\end{figure}

\section{Games} \label{sect-games}

In this section, we describe in detail the two-player zero-sum game that we call \textit{a random walk for
$\lambda_j$}.

Let $\Omega \subset\R^N$ be a bounded open set and fix $\eps>0$. 
A token is placed at $x_0\in\Omega $.
Player~I, the player seeking to minimize the final payoff,
chooses a subspace $S$ of dimension $j$
and then Player~II (who wants to maximize the expected payoff)
chooses one unitary vector, $v$, in the subspace $S$.
Then the position of the token
is moved to $x\pm \eps v$ with equal probabilities. 
After the first round, the game continues from $x_1$ according to the same rules.

This procedure yields a possibly infinite sequence of game states
$x_0,x_1,\ldots$ where every $x_k$ is a random variable.
The game ends when the token leaves $\Omega$, at this point the 
token will be in the boundary strip of
width $\epsilon$ given by
\[
\begin{split}\Gamma_\epsilon=
\{x\in {\mathbb{R}}^N \setminus \Omega \,:\,\dist(x,\partial \Omega )< \epsilon\}.
\end{split}
\]
We denote by $x_\tau \in \Gamma_\eps$ the first point in the
sequence of game states that lies in $\Gamma_\eps$, so that $\tau$
refers to the first time we hit $\Gamma_{\eps}$.
At this time the game
ends with the final payoff given by $g(x_\tau)$, where
$g:\Gamma_\eps
\to
\R$ is a given
continuous function that we call
\emph{payoff function}. Player~I earns $-g(x_\tau)$ while Player~II
earns $g(x_\tau)$.

A strategy $S_\I$ for Player~I is a function defined on the
partial histories that gives a $j-$dimensional subspace $S$ at every step of the game
\[
S_\I{\left(x_0,x_1,\ldots,x_k\right)}=S\in  Gr(j,\R^N).
\]
A strategy $S_\II$ for Player~II is a function defined on the
partial histories that gives a unitary vector in a prescribed $j-$dimensional subspace $S$ at every step of the game
\[
S_\II{\left(x_0,x_1,\ldots,x_k,S\right)}=v\in S.
\]

When the two players fix their strategies $S_I$ (the first player chooses 
a subspace $S$ at every step of the game) and $S_{II}$ (the second player chooses a unitary vector $v\in S$
at every step of the game) we can compute the expected outcome as follows:
Given the sequence $x_0,\ldots,x_k$ with $x_k\in\Om$ the next game position is distributed according to
the probability
\[
\begin{split}
\pi_{S_\I,S_\II}&(x_0,\ldots,x_k,{A})\\
&= \frac12 \delta_{x_k+\eps S_\II(x_0,\ldots,x_k,S_\I(x_0,\ldots,x_k))}(A)+
\frac12 \delta_{x_k-\eps S_\II(x_0,\ldots,x_k,S_\I(x_0,\ldots,x_k))}(A).
\end{split}
\]
By using the Kolmogorov's extension theorem and the one step transition probabilities, we can build a
probability measure $\mathbb{P}^{x_0}_{S_\I,S_\II}$ on the
game sequences. The expected payoff, when starting from $x_0$ and
using the strategies $S_\I,S_\II$, is
\begin{equation}
\label{eq:defi-expectation}
\mathbb{E}_{S_{\I},S_\II}^{x_0}\left[g(x_\tau)\right]=\int_{H^\infty} g(x_\tau) \ud
\mathbb{P}^{x_0}_{S_\I,S_\II}.
\end{equation}

The \emph{value of the game for Player I} is given by
\[
u^\eps_\I(x_0)=\inf_{S_\I}\sup_{S_{\II}}\,
\mathbb{E}_{S_{\I},S_\II}^{x_0}\left[g(x_\tau)\right]
\]
while the \emph{value of the game for Player II} is given by
\[
u^\eps_\II(x_0)=\sup_{S_{\II}}\inf_{S_\I}\,
\mathbb{E}_{S_{\I},S_\II}^{x_0}\left[g(x_\tau)\right].
\]
Intuitively, the values $u_\I(x_0)$ and $u_\II(x_0)$ are the best
expected outcomes
 each player can  guarantee when the game starts at
$x_0$. If $u^\eps_\I= u^\eps_\II$, we say that the game has a
value.

Let us observe that the game ends almost surely, then
the expectation \eqref{eq:defi-expectation} is well defined.
If we consider the square of the distance to a fix point in $\Gamma_\eps$, 
at every step, this values increases by 
at least
$\eps^2$ with probability $\frac12$.
As the distance to that point is bounded with a positive probability the game ends after a finite 
number of steps.
This implies that the game ends almost surely.

To see that the game has a value, we can consider $u^\eps$, a function that satisfies the DPP
\begin{equation*}
\left\{
\begin{array}{ll}
\displaystyle u^\eps (x) = \inf_{
{dim}(S)=j
} \sup_{
v\in S, |v|=1}
\left\{ \frac{1}{2} u^\eps (x + \eps v) + \frac{1}{2} u^\eps (x - \eps v)
\right\}  & x \in \Omega, \\[10pt]
u^\eps (x) = g(x)  & x \not\in \Omega.
\end{array}
\right.
\end{equation*}
The existence of such a function can be seen by Perron's method.
The operator given by the RHS of the DPP is in the hipoteses of the main result of \cite{QS}.

Now, we want to prove that $u^\eps=u^\eps_\I= u^\eps_\II$.
We know that $u^\eps_\I\geq u^\eps_\II$, to obtain the desired result, we will show that $u^\eps\geq u^\eps_\I$ and $u^\eps_\II \geq u^\eps$.

Given $\eta>0$ we can consider the strategy $S_\II^0$ for Player~II 
that at every step almost maximize 
$u^\eps (x_k + \eps v) + u^\eps (x_k - \eps v)$, that is
\[
S_\II^0{\left(x_0,x_1,\ldots,x_k,S\right)}=w\in S
\]
such that
\[
\begin{split}
\left\{ \frac{1}{2} u^\eps (x_k + \eps w) + \frac{1}{2} u^\eps (x_k - \eps w)\right\}\geq
\quad\quad\quad\quad\quad\quad\quad\quad\quad\quad\\
\sup_{
v\in S, |v|=1}
\left\{ \frac{1}{2} u^\eps (x_k + \eps v) + \frac{1}{2} u^\eps (x_k - \eps v) \right\}
-\eta 2^{-(k+1)} 
\end{split}
\]

We have
\[
\begin{split}
&\mathbb{E}_{S_\I, S^0_\II}^{x_0}[u^\eps(x_{k+1})-\eta 2^{-(k+1)}|\,x_0,\ldots,x_k]
\\
&\qquad \geq
\inf_{S , {dim}(S)=j}
 \sup_{v\in S, |v|=1}
\left\{ \frac{1}{2} u^\eps (x_k + \eps v) + \frac{1}{2} u^\eps (x_k - \eps v)
\right\}
\\
& \qquad\qquad -\eta 2^{-(k+1)}-\eta 2^{-(k+1)}
\\
&\qquad \geq u^\eps(x_k)-\eta 2^{-k},
\end{split}
\]
where we have estimated the strategy of Player I by $\inf$ and used the DPP.
Thus
\[
M_k=
u^\eps(x_k)-\eta2^{-k} 
\]
is a submartingale.
Now, we have
\begin{equation*}
\begin{split}
u^\eps_\II(x_0)
&=\sup_{S_\II}\inf_{S_{\I}}\,
\mathbb{E}_{S_{\I},S_\II}^{x_0}\left[g(x_\tau)\right]\\
&\geq\inf_{S_{\I}}\,
\mathbb{E}_{S_{\I},S^0_\II}^{x_0}\left[g(x_\tau)\right]\\
&\geq \inf_{S_\I} \liminf_{k\to\infty}\mathbb{E}_{S_{\I},S^0_\II}^{x_0}[M_{\tau\wedge k}]\\
&\geq \inf_{S_\I}\mathbb{E}_{S_{\I},S^0_\II}^{x_0}[M_0]=u^\eps(x_0)-\eta,
\end{split}
\end{equation*}
where $\tau\wedge k=\min(\tau,k)$, and we used the optional stopping theorem for $M_{k}$.
Since $\eta$ is arbitrary this proves that $u^\eps_\II \geq u^\eps$.
An analogous strategy can be consider for Player~I to prove that $u^\eps\geq u^\eps_\I$.

Now our aim is to pass to the limit in the values of the game
$$
u^\eps \to u, \qquad \mbox{as } \eps \to 0
$$
and obtain in this limit process a viscosity solution to \eqref{1.1}.

To obtain a convergent subsequence $u^\eps \to u$ we will use the following
Arzela-Ascoli type lemma. For its proof see Lemma~4.2 from \cite{MPRb}.

\begin{lemma}\label{lem.ascoli.arzela} Let $\{u^\eps : \overline{\Omega}
\to \R,\ \eps>0\}$ be a set of functions such that
\begin{enumerate}
\item there exists $C>0$ such that $\abs{u^\eps (x)}<C$ for
    every $\eps >0$ and every $x \in \overline{\Omega}$,
\item \label{cond:2} given $\eta>0$ there are constants
    $r_0$ and $\eps_0$ such that for every $\eps < \eps_0$
    and any $x, y \in \overline{\Omega}$ with $|x - y | < r_0 $
    it holds
$$
|u^\eps (x) - u^\eps (y)| < \eta.
$$
\end{enumerate}
Then, there exists  a uniformly continuous function $u:
\overline{\Omega} \to \R$ and a subsequence still denoted by
$\{u^\eps \}$ such that
\[
\begin{split}
u^{\eps}\to u \qquad\textrm{ uniformly in}\quad\overline{\Omega},
\end{split}
\]
as $\eps\to 0$.
\end{lemma}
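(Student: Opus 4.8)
The plan is to run the classical Arzel\`a--Ascoli diagonal argument, using hypothesis \eqref{cond:2} as a substitute for genuine equicontinuity (this is how Lemma~4.2 of \cite{MPRb} is proved). First I would fix an arbitrary sequence $\eps_n\to 0$ and a countable dense subset $\{z_i\}_{i\geq 1}$ of $\overline{\Omega}$, which exists because $\overline{\Omega}$ is compact. By hypothesis~(1) the sequence $\{u^{\eps_n}(z_1)\}_n$ is bounded, hence has a convergent subsequence; refining successively over $z_2, z_3, \dots$ and then taking the diagonal subsequence, still denoted $\{\eps_n\}$, we obtain that $u^{\eps_n}(z_i)$ converges for every $i$. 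Denote the limit by $u(z_i)$.

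Next I would show that $u$ extends to a uniformly continuous function on all of $\overline{\Omega}$. Given $\eta>0$, take $r_0,\eps_0$ from hypothesis \eqref{cond:2}. If $z_i,z_j$ satisfy $|z_i-z_j|<r_0$, then $|u^{\eps_n}(z_i)-u^{\eps_n}(z_j)|<\eta$ for every $\eps_n<\eps_0$, and letting $n\to\infty$ yields $|u(z_i)-u(z_j)|\leq\eta$. Thus $u$ is uniformly continuous on the dense set $\{z_i\}$, and therefore extends uniquely to a uniformly continuous function $u:\overline{\Omega}\to\R$.

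Finally I would prove that $u^{\eps_n}\to u$ uniformly on $\overline{\Omega}$. Fix $\eta>0$ and pick $r_0,\eps_0$ for the value $\eta/3$ via hypothesis \eqref{cond:2}, shrinking $r_0$ if necessary so that uniform continuity of $u$ also gives $|u(x)-u(y)|<\eta/3$ whenever $|x-y|<r_0$. By compactness, finitely many of the points $z_i$, say $z_1,\dots,z_m$, form an $r_0$-net of $\overline{\Omega}$. Since $u^{\eps_n}(z_i)\to u(z_i)$ for each of these finitely many indices, for $n$ large enough we have both $\eps_n<\eps_0$ and $|u^{\eps_n}(z_i)-u(z_i)|<\eta/3$ for all $i=1,\dots,m$. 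Then for any $x\in\overline{\Omega}$, choosing $z_i$ with $|x-z_i|<r_0$ and splitting through $z_i$,
\[
|u^{\eps_n}(x)-u(x)|\leq |u^{\eps_n}(x)-u^{\eps_n}(z_i)|+|u^{\eps_n}(z_i)-u(z_i)|+|u(z_i)-u(x)|<\eta.
\]
As $\eta$ is arbitrary, the convergence is uniform.

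The argument presents no real obstacle; the only point requiring attention is that hypothesis \eqref{cond:2} furnishes only an \emph{asymptotic} equicontinuity, valid for $\eps<\eps_0$, so one must consistently work with the tail of the chosen sequence and never invoke the estimate for a single fixed small $\eps$.
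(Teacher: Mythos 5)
Your argument is correct and is the standard diagonal/Arzel\`a--Ascoli construction adapted to the asymptotic equicontinuity hypothesis \eqref{cond:2}: diagonalize over a countable dense set, show the pointwise limit is uniformly continuous there, extend by density, and then upgrade to uniform convergence via a finite $r_0$-net. The paper does not give its own proof but refers to Lemma~4.2 of \cite{MPRb}, which proceeds along the same lines, and you correctly flag the one delicate point, namely that \eqref{cond:2} is only a tail estimate and so must be invoked for $\eps<\eps_0$ rather than for a single fixed $\eps$.
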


So our task now is to show that the family $u^\eps$ satisfies the hypotheses of the previous lemma.

\begin{lemma}\label{lem.ascoli.arzela.acot} 
There exists $C>0$ independent of $\eps$ such that $$\abs{u^\eps (x)}<C$$ for
    every $\eps >0$ and every $x \in \overline{\Omega}$.
\end{lemma}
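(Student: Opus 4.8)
The statement is that the game values $u^\eps$ are uniformly bounded in $\eps$. The plan is to bound $u^\eps$ directly from the game: since $u^\eps(x) = \inf_{S_\I}\sup_{S_\II}\mathbb{E}^x_{S_\I,S_\II}[g(x_\tau)]$, and $g$ is a fixed continuous function on the compact strip $\Gamma_\eps \subset \overline{\Omega^{(\eps)}}$ — or, more uniformly, on a fixed compact neighborhood of $\partial\Omega$ for all $\eps$ below a threshold — we have $|g| \le M$ for some $M$ independent of $\eps$. The only subtlety is that the game must terminate, i.e. $\tau < \infty$ almost surely, which has already been established in the excerpt (using that the squared distance to a fixed exterior point increases by at least $\eps^2$ with probability $\tfrac12$ at each step, while staying bounded). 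Once $\tau<\infty$ a.s., the random variable $g(x_\tau)$ is well defined and satisfies $|g(x_\tau)| \le M$ pointwise, hence $|\mathbb{E}^x_{S_\I,S_\II}[g(x_\tau)]| \le M$ for every pair of strategies, and therefore $|u^\eps(x)| \le M$ for every $x$ and every $\eps$.

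First I would fix a compact set $K$ with $\Gamma_\eps \subset K$ for all $\eps \le \eps_0$ (for instance $K = \{x : \dist(x,\partial\Omega) \le \eps_0\}$), and set $C := \sup_K |g| + 1 < \infty$; this $C$ is manifestly independent of $\eps$. Next I would recall from the termination argument above that $\mathbb{P}^{x}_{S_\I,S_\II}(\tau < \infty) = 1$ for every $x \in \Omega$ and every choice of strategies, so that $g(x_\tau)$ is a genuine bounded random variable with $|g(x_\tau)| \le C$ almost surely. Then, for any strategies $S_\I, S_\II$,
\[
\left| \mathbb{E}^{x}_{S_\I,S_\II}[g(x_\tau)] \right| \le \mathbb{E}^{x}_{S_\I,S_\II}\big[ |g(x_\tau)| \big] \le C.
\]
Taking $\sup_{S_\II}$ and then $\inf_{S_\I}$ preserves this bound, giving $|u^\eps_\I(x)| \le C$; since by Theorem~\ref{teo.dpp} the game has a value, $|u^\eps(x)| = |u^\eps_\I(x)| \le C$ for all $x \in \overline{\Omega}$ and all $\eps \le \eps_0$. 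For the finitely many scales $\eps > \eps_0$ (if one insists on all $\eps$ rather than small $\eps$), the same argument works with $\Gamma_\eps$ in place of $K$; in any case only the regime $\eps \to 0$ is needed for the Arzela--Ascoli argument, so one may simply restrict to $\eps \le \eps_0$.

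I expect no real obstacle here: the only thing that could go wrong is the game failing to end, which would make $g(x_\tau)$ undefined, but that has already been handled in the preceding discussion. The one point worth stating carefully is the $\eps$-uniformity of the bound on $|g|$, which is why I would phrase the domain of $g$ as a fixed compact neighborhood of $\partial\Omega$ (containing every $\Gamma_\eps$ for small $\eps$) rather than the $\eps$-dependent strip; with that observation the constant $C$ is visibly independent of $\eps$ and the lemma follows.
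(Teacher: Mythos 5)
Your proof is correct and is essentially the paper's own argument, just written out in full: the paper's proof is simply the observation that $\min g \le u^\eps(x) \le \max g$, which is exactly the chain you establish (bound $|g(x_\tau)|$ pointwise a.s., pass to expectations, then take $\sup_{S_\II}$ and $\inf_{S_\I}$). The extra care you take about the $\eps$-dependence of the strip $\Gamma_\eps$ is a reasonable tidying-up but does not change the argument.
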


\begin{proof} We just observe that
$$
\min g \leq u^\eps (x) \leq \max g
$$
for every $x \in \overline{\Omega}$.
\end{proof}

To prove that $u^\eps$ satisfies second hypothesis we will have to make some geometric assumptions on the domain.
For our game with a given $j$ we will assume that $\Omega$ satisfies both ($F_j$) and ($F_{N-j+1}$).

Let us observe that for $j=1$ we assume ($F_N$), this condition can be read as follows.
Given $y\in\partial\Omega$ we assume that there exists
 $r>0$ such that for every $\delta>0$ there exists $v\in\R^N$ of norm 1 and $\theta>0$ such that
\begin{equation} \label{cond.geo1}
\{x\in\Omega\cap B_r(y): \langle v,x-y\rangle<\theta\}\subset B_\delta(y).
\end{equation}

\begin{lemma}\label{lem.ascoli.arzela.asymp} Given $\eta>0$ there are constants
    $r_0$ and $\eps_0$ such that for every $\eps < \eps_0$
    and any $x, y \in \overline{\Omega}$ with $|x - y | < r_0 $
    it holds
$$
|u^\eps (x) - u^\eps (y)| < \eta.
$$
\end{lemma}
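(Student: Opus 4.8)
The plan is to separate the estimate into a regime near $\partial\Omega$ and an interior regime, following the pattern of analogous results for Tug--of--War type games (the conclusion being exactly the hypothesis of Lemma~\ref{lem.ascoli.arzela}). The heart of the matter is a \emph{boundary estimate}: for every $y\in\partial\Omega$ and every $\eta>0$ there exist $\rho>0$ and $\eps_0>0$ such that $|u^\eps(x)-g(y)|<\eta$ for all $\eps<\eps_0$ and all $x\in\overline\Omega\cap B_\rho(y)$. Once this is available the lemma follows quickly: $\partial\Omega$ is compact, so finitely many such balls cover it; together with the uniform continuity of $g$ this controls $|u^\eps(x)-u^\eps(y)|$ whenever $x$ and $y$ both lie within a fixed small distance of $\partial\Omega$ and are close to each other; and when $x,y$ both lie at distance at least $\rho$ from $\partial\Omega$ (and are close) we use the coupling argument below. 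The two regimes overlap, so the estimates patch together into a single modulus valid on all of $\overline\Omega$, uniformly in $\eps$.

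For the boundary estimate I would prove the upper bound $u^\eps(x)\le g(y)+\eta$ by a strategy for Player~I (the minimizer) and the lower bound $u^\eps(x)\ge g(y)-\eta$ symmetrically for Player~II, the latter using condition $(F_{N-j+1})$ in place of $(F_j)$ --- this is precisely why condition $(F)$ requires both. For the upper bound, translate so that $y=0$, fix $\delta>0$ (to be chosen at the end so that the oscillation of $g$ near $y$ and $\delta$ itself are small compared with $\eta$), and let $T,v,\lambda,\theta$ be the objects furnished by $(F_j)$. Player~I always picks the subspace $T$; then the token never leaves the affine plane $x_0+T\subset T_\lambda$, and $(F_j)$ says that the part of $\Omega\cap B_r(0)\cap T_\lambda$ lying outside $B_\delta(0)$ is contained in the half--space $\{\langle v,x\rangle\ge\theta\}$. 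From this geometric squeeze one deduces a confinement: started in a small ball $B_\rho(0)$, the walk stays near $0$ until it exits $\Omega$, because a single $\eps$-step cannot carry it across the gap into $\{\langle v,x\rangle\ge\theta\}$; hence it leaves $\Omega$ at a point of $\Gamma_\eps$ close to $0$, where $g$ is close to $g(0)=g(y)$. Since this holds against every strategy of Player~II, $u^\eps(x_0)=u^\eps_\I(x_0)\le g(y)+(\mathrm{error}\to 0)$. Equivalently one may phrase this through a discrete barrier: a quadratic of the form $\overline u(x)=-A|x|^2+B\,\dist(x,T)^2+\langle v,x\rangle$ has constant Hessian, so the Courant--Fischer identity $\inf_{\dim S=j}\sup_{w\in S,\,|w|=1}\langle M w,w\rangle=\lambda_j(M)$ together with the exact second--difference formula for quadratics gives, at the optimal subspace, $\tfrac12\overline u(x+\eps w)+\tfrac12\overline u(x-\eps w)=\overline u(x)+\tfrac{\eps^2}{2}\lambda_j(D^2\overline u)=\overline u(x)-A\eps^2<\overline u(x)$; thus $\overline u$ is a strict supersolution of the DPP, and after suitably choosing $A$, $B$ and a large multiplier so that $\overline u$ dominates $u^\eps$ on the discrete boundary of $\Omega\cap B_r(0)$, the same submartingale and optional--stopping argument already used to identify $u^\eps$ with the game value gives $u^\eps\le g(y)+\eta$ near $y$.

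For the interior regime, let $x,y\in\overline\Omega$ with $\dist(x,\partial\Omega),\dist(y,\partial\Omega)\ge\rho$ and $|x-y|$ small. I would couple the two games: in the game started at $x$, Player~I plays a copy --- translated by the fixed vector $x-y$ --- of a nearly optimal Player~I strategy of the game started at $y$. Against any strategy of Player~II the two trajectories then differ by exactly $x-y$ at every step. Running the coupling until the first time either token comes within $\rho$ of $\partial\Omega$, and invoking the boundary estimate at that point together with the uniform continuity of $g$, yields $|u^\eps(x)-u^\eps(y)|<\eta$ (the same bound following from the $y$-game by symmetry).

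I expect the main obstacle to be the boundary estimate itself, i.e.\ converting condition $(F)$ --- a weak, non-smooth substitute for the curvature condition $(H)$ --- into a confinement (equivalently, a discrete barrier) near $\partial\Omega$ that is uniform in $\eps$. The delicate point is that the geometric data $\lambda,\theta$ produced by $(F_j)$ degenerate as $\delta\to0$ (already for a smooth strictly convex boundary one has $\theta\sim\delta^2$), so the scales $\rho$ and $\eps_0$, the barrier constants, and the size of the multiplier must be balanced carefully so that the resulting error still tends to $0$; this bookkeeping is the technical core, and the only place where condition $(F)$ is genuinely used, replacing the explicit curvature computation available under $(H)$ in Section~\ref{sect-prelim} (the implications $(H)\Rightarrow(F)\Rightarrow(G)$ of Section~\ref{sect-HFG} confirm that this is the correct hypothesis).
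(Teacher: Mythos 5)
Your overall skeleton matches the paper's: reduce to a boundary estimate at $y\in\partial\Omega$ via a translation coupling, and obtain the one-sided bounds $u^\eps(x)\le g(y)+\eta$ and $u^\eps(x)\ge g(y)-\eta$ separately, the first with Player~I committing to the subspace $T$ from $(F_j)$ and the second with Player~II using $(F_{N-j+1})$ (exploiting $\dim S+\dim T>N$ to intersect $T$). That much is exactly what the paper does. The gap is in how you establish the boundary estimate.

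Your first argument --- the \emph{deterministic confinement} --- is wrong: condition $(F_j)$ does not create a ``gap'' that an $\eps$-step cannot cross. It only says that the part of $\Omega\cap B_r(y)\cap T_\lambda$ with $\langle v,x-y\rangle<\theta$ lies inside $B_\delta(y)$; it says nothing about what $\Omega\cap T_\lambda$ looks like in the band $\theta-\eps\le\langle v,x-y\rangle<\theta$, which is still inside $B_\delta(y)$ and hence inside $\Omega$. So the walk can perfectly well drift up through that band (each step changes the projection by at most $\eps$), cross $\langle v,x-y\rangle=\theta$, and from there wander anywhere in $\Omega\cap T_\lambda\cap B_r(y)$ without ever leaving $\Omega$. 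Already for a smooth strictly convex $\Omega$ (where $\theta\sim\delta^2$) the walk is free to escape the $\delta$-neighborhood of $y$; it is only \emph{unlikely} to do so, not prevented. What the paper uses instead is precisely this probabilistic fact: for \emph{any} strategies, $\langle x_n-y,v\rangle$ is a martingale (each step adds $\pm\eps\langle w,v\rangle$ with equal probability). Since $(F_j)$ keeps the projection bounded below (of order $\delta$, not $-\infty$) while the walk is in $\Omega\cap B_r(y)\cap T_\lambda$ with projection $<\theta$, optional stopping plus a one-sided Markov-type estimate gives $\mathbb{P}(\langle x_\tau-y,v\rangle\gtrsim r_0^{1/2})\lesssim r_0^{1/2}$, hence $\mathbb{P}(|x_\tau-y|>\delta)$ is small, after which the estimate on $|u^\eps(x)-g(y)|$ follows from continuity of $g$. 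Your alternative quadratic barrier $\overline u=-A|x|^2+B\,\dist(x,T)^2+\langle v,x\rangle$ is indeed a strict DPP supersolution (its Hessian is constant with $\lambda_j=-2A$), but you would still have to check that it dominates $u^\eps$ on the entire discrete boundary $\partial B_r(y)\cap\Omega\cap T_\lambda\ \cup\ \Gamma_\eps\cap B_r(y)$, which forces $A r^2<\theta$ and a delicate tuning of $A,B,r,\lambda,\theta,\delta$ that you only flag as ``the technical core'' rather than carry out. The paper avoids this bookkeeping entirely by using the linear martingale $\langle x_n-y,v\rangle$ directly.
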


\begin{proof} 
The case $x,y \in \Gamma_\eps$ follows from the uniformity continuity of $g$ in $\Gamma_\eps$.
For the case $x,y \in \Omega $ we argue as follows.
We fix the strategies $S_\I,
S_\II$ for the game starting at $x$. We define a virtual game
starting at $y$. We use the same random steps as
the game starting at $x$. Furthermore, the players adopt
their strategies $S_\I^v, S_\II^v$ from the game starting at $x$,
that is, when the game position is $y_k$ a player make the choices that would have taken at $x_k$ in the game starting at $x$.
We proceed in this way until for the first time $x_k \in
\Gamma_\eps$ or $y_k \in \Gamma_\eps$. At that point we have $|x_k
-y_k| =|x-y|$, and the desired estimate follow from the one for 
$x_k \in \Omega$, $y_k\in \Gamma_\eps$ or for $x_k, y_k \in
\Gamma_\eps$.

Thus, we can concentrate on the case $x\in \Omega$ and $y \in \Gamma_\eps$.
Even more, we can assume that $y\in\partial\Omega$.
If we have the bound for those points we can obtain a bound for a point $y \in \Gamma_\eps$ 
just by considering $z\in\partial\Omega$ in the line segment between $x$ and $y$.

In this case we have 
$$
u_\eps (y) = g(y),
$$
and we need to obtain a bound for $u_\eps (x)$.

First, we deal with $j=1$.
To this end we just observe that, for any possible strategy
of the players (that is, for any possible choice of the direction $v$ at every point)
we have that the projection of $x_n$ in the direction of the a fixed vector $w$ of norm 1, 
$$
\left\langle x_n-y, w\right\rangle 
$$
is a martingale.
We fix $r>0$ and consider $x_\tau$, the first time $x$ leaves $\Omega$ or $B_r(y)$.
Hence
$$
\mathbb{E} \left\langle x_\tau-y, w \right\rangle \leq \left\langle x-y, w \right\rangle \leq d(x,y)  < r_0.
$$
From the geometric assumption on $\Omega$, we have that $\left\langle x_n-y, w\right\rangle \geq -\eps$.
Therefore
$$
\mathbb{P} \left( \left\langle x_\tau-y, w \right\rangle  > r_0^{1/2} \right) r_0^{1/2} -
\left(1-\mathbb{P} \left(  \left\langle x_\tau-y, w \right\rangle  > r_0^{1/2} \right) \right)\eps < r_0.
$$
Then, we have (for every $\eps$ small enough)
$$
\mathbb{P} \left( \left\langle x_\tau-y, w \right\rangle  > r_0^{1/2} \right)  <  2 r_0^{1/2}.
$$
Then, \eqref{cond.geo1} implies that 
given $\delta>0$ we can conclude that
$$
\mathbb{P} ( d ( x_\tau, y)  > \delta )  <  2  r_0^{1/2}.
$$
by taking $r_0$ small enough and a appropriate $w$.

When $d(x_\tau, y)\leq \delta$, the point $x_\tau$ is actually the point where the process have leaved $\Omega$.
Hence,
$$
\begin{array}{l}
|u_\eps (x) - g(y)| \\[10pt]
\displaystyle \leq \mathbb{P} (d(x_\tau, y)\leq \delta )|g (x_\tau) - g(y)| + 
\mathbb{P} (d(x_\tau, y)> \delta ) 2 \max g \\[10pt]
\displaystyle
\leq \sup_{x_\tau\in B_\delta(y)}|g (x_\tau) - g(y)| +4r_0^{1/2} \max g <\eta
\end{array}
$$
if $r_0$ and $\delta$ are small enough.

For a general $j$ we can proceed in the same way.
We have to make some extra work to argue that the points $x_n$ that appear along the argument belong to $T_\lambda$.
If $r_0<\lambda$ we have that $x\in T_\lambda$, so if we make sure that at every move $v\in T$ we will have that the game sequence will be contained in $x+T\subset T_\lambda$. 

Recall that here we are assuming both ($F_j$) and ($F_{N-j+1}$) are satisfied.
We can separate the argument into two parts. We will prove on the one hand that $u_\eps (x) - g(y)<\eta$ 
and on the other that $g(y)-u_\eps (x)<\eta$.
For the first inequality we can make extra assumptions on the strategy for Player~I, and for the second one we can do the same with Player~II. 

Since $\Omega$ satisfies ($F_j$), Player~I can make sure that at every move $v$ belongs to $T$ by selecting $S=T$.
This proves the upper bound $u_\eps (x) - g(y)<\eta$.
On the other hand, since $\Omega$ satisfy ($F_{N-j+1}$), Player~II will be able to select $v$ in a space $S$ of dimension $j$ and hence he can always choose $v\in S\cap T$ since 
\[
\dim(T)+\dim(S)=N-j+1+j=N+1>N.
\]
This shows the lower bound $g(y)-u_\eps (x)<\eta$.
\end{proof}

From Lemma \ref{lem.ascoli.arzela.acot} and Lemma \ref{lem.ascoli.arzela.asymp}
we have that the hypotheses of the Arzela-Ascoli type lemma, Lemma \ref{lem.ascoli.arzela},
are satisfied. Hence we have obtained uniform convergence of $u^\eps$ along a subsequence.

\begin{corollary} \label{corol.converge}
Let $u^\eps$ be the values of the game. Then, along a subsequence,
\begin{equation}\label{eq.converge.22}
u^\eps \to u, \qquad \mbox{ as } \eps \to 0,
\end{equation}
uniformly in $\overline{\Omega}$. 
\end{corollary}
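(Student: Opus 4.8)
The plan is simply to combine the two preceding lemmas with the Arzel\`a--Ascoli type statement of Lemma~\ref{lem.ascoli.arzela}. First I would check that the family $\{u^\eps\}_{\eps>0}$ satisfies hypothesis~(1) of that lemma: this is precisely Lemma~\ref{lem.ascoli.arzela.acot}, where a uniform bound $|u^\eps(x)|<C$ holds with any $C>\max\{|\min g|,|\max g|\}$, since the DPP (or directly the probabilistic representation $u^\eps(x)=\inf_{S_\I}\sup_{S_\II}\mathbb{E}^{x}_{S_\I,S_\II}[g(x_\tau)]$) forces $u^\eps$ to take values between $\min g$ and $\max g$.

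Next I would verify hypothesis~(2), the asymptotic equicontinuity: given $\eta>0$ there are $r_0,\eps_0>0$ such that $|u^\eps(x)-u^\eps(y)|<\eta$ whenever $\eps<\eps_0$ and $|x-y|<r_0$. This is exactly Lemma~\ref{lem.ascoli.arzela.asymp}, whose proof reduces the interior--interior case to a boundary estimate via the virtual (coupled) game started at $y$ with the same random steps, and then treats the case $x\in\Omega$, $y\in\partial\Omega$ using conditions $(F_j)$ and $(F_{N-j+1})$ to confine the walk to a slab $T_\lambda$ and to control its exit point. With both lemmas in hand, the hypotheses of Lemma~\ref{lem.ascoli.arzela} are met for $\{u^\eps\}$.

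Finally I would invoke Lemma~\ref{lem.ascoli.arzela} directly: it yields a uniformly continuous $u:\overline{\Omega}\to\R$ and a subsequence, still denoted $\{u^\eps\}$, with $u^\eps\to u$ uniformly on $\overline{\Omega}$, which is the claim. The real work has already been done in Lemmas~\ref{lem.ascoli.arzela.acot} and~\ref{lem.ascoli.arzela.asymp}; the main obstacle is thus not in this corollary but in the equicontinuity estimate, which is where the geometric hypotheses on $\partial\Omega$ are indispensable. It is worth recording that at this stage the convergence is only along a subsequence and the limit $u$ is not yet identified; uniqueness of viscosity solutions to \eqref{1.1} (proved in \cite{HL1}) will later upgrade this to convergence of the whole family and to the identification of $u$ as the unique solution, completing Theorem~\ref{teo.converge}.
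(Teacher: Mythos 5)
Your proposal is correct and matches the paper's argument exactly: the paper likewise derives Corollary~\ref{corol.converge} by verifying hypotheses~(1) and~(2) of Lemma~\ref{lem.ascoli.arzela} via Lemmas~\ref{lem.ascoli.arzela.acot} and~\ref{lem.ascoli.arzela.asymp}, then applying the Arzel\`a--Ascoli type lemma to extract a uniformly convergent subsequence. Your closing remark about later upgrading subsequential convergence to convergence of the full family via uniqueness is also precisely what the paper records in the remark following Theorem~\ref{teo.converge.22}.
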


Now, let us prove that any possible limit of $u^\eps$ is a viscosity solution to
the limit PDE problem. 

\begin{theorem} \label{teo.converge.22}
Any uniform limit of the values of the game $u^\eps$, $u$, is a viscosity
solution to
\begin{equation}\label{1.1.teo.22}
\left\{
\begin{array}{ll}
\lambda_j(D^2 u)  = 0 , \qquad & \mbox{ in } \Omega, \\[5pt]
u=g , \qquad & \mbox{ on } \partial \Omega.
\end{array}
\right.
\end{equation}
\end{theorem}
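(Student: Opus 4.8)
The plan is to establish the two ingredients of the claim separately: that the limit $u$ attains the boundary datum, $u=g$ on $\partial\Omega$, and that $u$ solves $\lambda_j(D^2u)=0$ in the viscosity sense inside $\Omega$.

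For the boundary condition I would simply invoke Lemma~\ref{lem.ascoli.arzela.asymp}: given $y\in\partial\Omega$ and $\eta>0$, that lemma provides $r_0,\eps_0$ with $|u^\eps(x)-g(y)|<\eta$ for all $\eps<\eps_0$ and all $x\in\Omega$ with $|x-y|<r_0$ (here $u^\eps(y)=g(y)$ since $y\notin\Omega$). Passing to the limit along the convergent subsequence gives $|u(x)-g(y)|\le\eta$ on $B_{r_0}(y)\cap\Omega$; since $u$ is continuous (Lemma~\ref{lem.ascoli.arzela}, Corollary~\ref{corol.converge}) and $\eta$ is arbitrary, $u(y)=g(y)$.

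For the interior equation the key algebraic fact is the Courant--Fischer min--max formula
\[
\lambda_j(A)\ =\ \min_{\dim S=j}\ \max_{v\in S,\ |v|=1}\ \langle Av,v\rangle ,
\]
which is precisely the structure of the operator on the right-hand side of the DPP. To check that $u$ is a subsolution I would take $\psi\in C^2$ touching $u$ from above at $x_0\in\Omega$ (a strict maximum of $u-\psi$ over a small ball $\overline{B_\rho(x_0)}\subset\Omega$, with value $0$), and for each $\eps$ choose a point $x_\eps$ with $u^\eps(x_\eps)-\psi(x_\eps)\ge\sup_{\overline{B_\rho(x_0)}}(u^\eps-\psi)-\eps^3$; uniform convergence $u^\eps\to u$ and strictness of the maximum force $x_\eps\to x_0$, so for small $\eps$ the points $x_\eps$ and $x_\eps\pm\eps v$ stay in $\Omega\cap\overline{B_\rho(x_0)}$. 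From the choice of $x_\eps$ one has $u^\eps(x_\eps\pm\eps v)\le u^\eps(x_\eps)+\psi(x_\eps\pm\eps v)-\psi(x_\eps)+\eps^3$; inserting this into the DPP at $x_\eps$ and cancelling $u^\eps(x_\eps)$ gives
\[
0\ \le\ \inf_{\dim S=j}\ \sup_{v\in S,\ |v|=1}\Big\{\tfrac12\psi(x_\eps+\eps v)+\tfrac12\psi(x_\eps-\eps v)-\psi(x_\eps)\Big\}+\eps^3 .
\]
A second-order Taylor expansion (remainder uniform in $|v|=1$, since $\psi\in C^2$ on a compact set) replaces the bracket by $\tfrac{\eps^2}{2}\langle D^2\psi(x_\eps)v,v\rangle+o(\eps^2)$; dividing by $\eps^2/2$, letting $\eps\to0$, using $x_\eps\to x_0$ and the min--max formula yields $\lambda_j(D^2\psi(x_0))\ge0$, which is condition (2) of Definition~\ref{def.sol.viscosa.1}. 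The supersolution property is entirely symmetric: one touches $u$ from below by $\phi\in C^2$, picks $x_\eps$ almost minimizing $u^\eps-\phi$ over $\overline{B_\rho(x_0)}$, reverses all inequalities, and concludes $\lambda_j(D^2\phi(x_0))\le0$.

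I expect no serious obstacle here: once the DPP (Theorem~\ref{teo.dpp}) and the uniform convergence (Corollary~\ref{corol.converge}) are in hand, the interior argument is the standard ``DPP passes to the PDE in the limit'' computation. The only points requiring care are that $u^\eps$ is a priori merely bounded rather than continuous, which is why one works with almost-optimal points $x_\eps$ carrying an $o(\eps^2)$ slack, and the observation that the limiting min--max is exactly the Courant--Fischer expression for $\lambda_j$, so that the DPP operator is the correct discretization of $\lambda_j(D^2\cdot)$. It is worth noting that the interior part uses nothing about $\partial\Omega$; the geometric hypothesis (F) enters only via Lemma~\ref{lem.ascoli.arzela.asymp}, i.e.\ in the attainment of the boundary values.
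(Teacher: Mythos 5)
Your proposal is correct and follows essentially the same route as the paper: the boundary values are obtained by passing the identity $u^\eps=g$ on $\partial\Omega$ through the uniform convergence, and the interior equation is obtained by testing the DPP at almost-optimal points $x_\eps$ for $u^\eps-\varphi$, Taylor-expanding the test function (uniformly in $|v|=1$), dividing by $\eps^2$, and invoking the Courant--Fischer characterization $\lambda_j(A)=\min_{\dim S=j}\max_{v\in S,|v|=1}\langle Av,v\rangle$. The only cosmetic difference is that you carry out the subsolution inequality in detail whereas the paper writes out the supersolution one, but the two halves are mirror images.
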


\begin{proof}
First, we observe that since $u^\eps =g$ on $\partial \Omega$ we obtain,
form the uniform convergence, that $u =g$ on $\partial \Omega$.
Also, notice that Lemma \ref{lem.ascoli.arzela}  gives that a uniform limit of 
$u^\eps$ is a continuous function. Hence, we avoid
the use of $u^*$ and $u_*$ in what follows.

To check that $u$ is a viscosity solution to $\lambda_j(D^2 u)  = 0$ in $\Omega$, in the
sense of Definition \ref{def.sol.viscosa.1}, let $\phi\in C^{2}$ be such that $ u-\phi $ has a strict
minimum at the point $x \in \Omega$  with $u(x)=
\phi(x)$. We need to check that
$$
\lambda_j ( D^2 \phi (x) )  \leq 0.
$$
As $u^\eps \to u$ uniformly in $\overline{\Omega}$ we have the existence of a sequence
$x_\eps$ such that $x_\eps \to x$ as $\eps \to 0$ and 
$$
u^\eps (z) - \phi (z) \geq u^\eps (x_\eps) - \phi (x_\eps) - \eps^3
$$
(remark that $u^\epsilon$ is not continuous in general). 
As $u^\eps$ is a solution to
$$
u^\epsilon (x) = \inf_{
 {dim}(S)=j
} \sup_{
v\in S, |v|=1}
\left\{ \frac{1}{2} u^\eps (x + \eps v) + \frac{1}{2} u^\eps (x - \eps v)
\right\} 
$$
we obtain that $\phi$ verifies the inequality
$$
0 \geq \inf_{
{dim}(S)=j
} \sup_{
v\in S, |v|=1}
\left\{ \frac{1}{2} \phi (x_\eps + \eps v) + \frac{1}{2} \phi (x_\eps - \eps v)
- \phi (x_\eps)
\right\} - \eps^3.
$$

Now, consider the Taylor
expansion of the second order of $\phi$
\[
\phi(y)=\phi(x)+\nabla\phi(x)\cdot(y-x)
+\frac12\langle D^2\phi(x)(y-x),(y-x)\rangle+o(|y-x|^2)
\]
as $|y-x|\rightarrow 0$. Hence, we have
\begin{equation} \label{22.q}
\phi(x+\eps v)=\phi(x)+\eps \nabla\phi(x)\cdot v
+\eps^2 \frac12\langle D^2\phi(x)v,v\rangle+o(\eps^2)
\end{equation}
and
\begin{equation} \label{33.q}
\phi(x- \eps v)=\phi(x) - \eps \nabla\phi(x)\cdot v
+\eps^2 \frac12\langle D^2\phi(x)v,v\rangle+o(\epsilon^2).
\end{equation}
Hence, using these expansions we get
$$
\frac{1}{2} \phi (x_\eps + \eps v) + \frac{1}{2} \phi (x_\eps - \eps v)
- \phi (x_\eps) = \frac{\eps^2}2 \langle D^2 \phi (x_\eps)v, v \rangle + o(\eps^2),
$$
and then we conclude that
$$
0 \geq \eps^2 \inf_{
{dim}(S)=j
} \sup_{
v\in S, |v|=1}
\left\{ \frac12 \langle D^2 \phi (x_\eps)v, v \rangle 
\right\} + o(\eps^2).
$$
Dividing by $\eps^2$ and passing to the limit as $\eps \to 0$ we get
$$
0 \geq  \inf_{
 {dim}(S)=j
} \sup_{
v\in S, |v|=1}
\left\{  \langle D^2 \phi (x)v, v \rangle 
\right\} ,
$$
that is equivalent to
$$
0 \geq \lambda_j (D^2 \phi (x)) 
$$
as we wanted to show. 

The reverse inequality when a smooth function $\psi$
touches $u$ from below can be obtained in a similar way.
\end{proof}

\begin{remark}{\rm Since there is uniqueness of viscosity solutions
to the limit problem \eqref{1.1.teo.22} (uniqueness holds for every domain without
any geometric restriction once we have existence of a continuous solution) we obtain that the uniform limit
$$
\lim_{\eps \to 0} u^\eps = u
$$
exists (not only along a subsequence).
}
\end{remark}

\section{Geometric conditions on $\partial \Omega$} \label{sect-HFG}

Now, our goal is to analyze the relation between the different conditions on $\partial \Omega$.
We have introduced in this paper three different conditions: 

(H) that involve the curvatures of $\partial \Omega$ and hence requires smoothness, this condition
was used in \cite{HL1} to obtain existence of a continuous viscosity solution
to \eqref{1.1}.

(F) that is given by ($F_j$) and ($F_{N-j+1}$). This condition was used to obtain
convergence of the values of the game. 

(G) that was proved to be equivalent to the solvability
of \eqref{1.1} for every continuous datum $g$.

We will show that
$$
{\rm (H)}   \Rightarrow  {\rm (F)}    \Rightarrow {\rm (G)}.
$$

\subsection{(H) implies ($F_j$)} Let us show that the condition $\kappa_{N-j+1}>0$ in (H) implies ($F_j$).
We consider $T=\langle x_{N-j+1},\dots,x_N\rangle$ (note that this is a subspace of dimension $j$), $v=x_N$ and $r$ as above.
We want to show that for every $\delta>0$ there exists $\lambda>0$ and $\theta>0$ such that
\begin{equation} 
\{x\in\Omega\cap B_r(y)\cap T_\lambda: \langle v,x-y\rangle<\theta\}\subset B_\delta(y).
\end{equation}
We have to choose $\lambda$ and $\theta$ such that for $x$ with $\|x\|>\delta$, 
\[
\|(x_1,\dots,x_{N-j})\|<\lambda\]
and
\[
x_N- \frac12 \sum_{i=1}^{N-1} \kappa_i x_i^2 > o \left(\sum_{i=1}^{N-1}  x_i^2\right),
\]
 it holds that 
 $$x_N>\theta.$$
Let us prove this fact. We have
\[
\begin{split}
x_N 
&>\frac12 \sum_{i=1}^{N-1} \kappa_i x_i^2+ o \left(\sum_{i=1}^{N-1}  x_i^2\right)\\
&\geq \frac12 \sum_{i=1}^{N-j} \kappa_i x_i^2+ \frac12\sum_{i=N-j+1}^{N-1} \kappa_i x_i^2+ o \left(\sum_{i=1}^{N-1}  x_i^2\right)\\
&\geq -C_1 \sum_{i=1}^{N-j}x_i^2+ C_2 \sum_{i=1}^{N-1}  x_i^2+ o \left(\sum_{i=1}^{N-1}  x_i^2\right)\\
&\geq -C_1 \lambda^2+ C_2\delta^2 + o \left(\sum_{i=1}^{N-1}  x_i^2\right)>\theta\\
\end{split}
\]
for $r$, $\lambda$ and $\theta$ small enough (for a given $\delta$).

\subsection{(F) implies (G)}
We proved that (F) implies existence of a continuous viscosity solution
to \eqref{1.1} (that was obtained as the limit of the values of the game
described in Section \ref{sect-games}). Notice that we have proved that (G) is equivalent to the 
existence of a continuous solution to \eqref{1.1} for every continuous datum $g$. Then,
we deduce that (F) implies (G). 

\medskip

The same argument can be used to show that (H) implies (G) directly.

\subsection{(H) implies (G)} We use again that (G) is equivalent to the 
existence of a continuous solution to \eqref{1.1} for every continuous datum $g$
and that in \cite{HL1} it is proved that (H) implies existence of a continuous viscosity solution
to \eqref{1.1} thanks to the construction of the barriers described in Section \ref{sect-prelim}.
Hence we can deduce that (H) implies (G).

\section{Asymptotic mean value formulas} \label{sect.asymp}

A well known fact 
states that $u$ is harmonic, that is $u$ verifies $\Delta u =0$, 
if and only if it verifies the mean value property
$$
u(x) =
\frac{1}{|B_\varepsilon (x) |} \int_{B_\varepsilon (x) } u(y) \, dy.
$$
For a mean value property for the $p-$Laplacian we refer to \cite{MPR} and \cite{LM}.

Here our main concern is to obtain mean value properties for our equation 
\begin{equation}\label{eq.1}
\lambda_j(X)  = 0 ,
\end{equation}
where for a matrix $X$, $\lambda_1(X) \leq ... \leq \lambda_N (X)$
stand for the ordered eigenvalues of $X$. 

Now, as we used before, we note that this equation can be written as
$$
\lambda_j (X) = \min_{dim(S)=j} \max_{v\in S, |v|=1}
  \langle X v, v \rangle  = \lambda_j ,
$$
where the minimum is taken among all possible subspaces of $\R^N$ with dimension $j$
and for each $S$ the maximum is taken among unitary vectors $v$ in $S$.
In fact, one can easily check that for any symmetric matrix $X$ its holds that
$$
\langle X v, v \rangle = \sum_{i=1}^N (a_i)^2 \lambda_i 
$$
if $\lambda_1\leq ... \leq \lambda_N$ are the eigenvalues of $X$, with
corresponding orthonormal eigenvectors $v_1,...,v_N$ and 
$v = \sum_{i=1}^N a_i v_i$. From this expression it can be easily
deduced that the $j-$st eigenvalue verifies
$$
 \min_{dim(S)=j} \max_{v\in S, |v|=1}
  \langle X v, v \rangle  = \lambda_j.
$$

Recall that in Section \ref{sect-prelim} we have introduced the definition of
viscosity solutions to \eqref{1.1}, Definition \ref{def.sol.viscosa.1}.

Now, we introduce the definition of
our asymptotic expansions in
\lq\lq a viscosity sense\rq\rq .
 As is the case in the theory of viscosity solutions,
we test the expansions of a function $u$ against test functions
$\phi$ that touch $u$ from below or above at a particular point.
As above, here $S$ denotes a subspace of dimension $j$.

\begin{definition} \label{def.sol.viscosa.asymp}
A continuous function  $u$  verifies
$$
u(x) =  \min_{dim(S)=j} \max_{v\in S, |v|=1}
  \left\{ \frac12 u(x+\eps v) + \frac12 u(x-\eps v)  \right\}
+ o(\eps^2), \quad \mbox{as }\eps \to 0,
$$
in the \emph{viscosity sense} if
\begin{enumerate}
\item for every $\phi\in C^{2}$ such that $ u-\phi $ has a strict
minimum at the point $x \in \overline \Omega$  with $u(x)=
\phi(x)$, we have
$$
0 \geq - \phi (x) +
 \min_{dim(S)=j} \max_{v\in S, |v|=1}
  \left\{ \frac12 \phi(x+\eps v) + \frac12 \phi (x-\eps v)  \right\}+ o(\eps^2).
$$

\item for every $ \psi \in C^{2}$ such that $ u-\psi $ has a
strict maximum at the point $ x \in \overline{\Omega}$ with $u(x)=
\psi(x)$, we have
$$
0 \leq - \psi (x) +
 \min_{dim(S)=j} \max_{v\in S, |v|=1}
  \left\{ \frac12 \psi(x+\eps v) + \frac12 \psi (x-\eps v)  \right\}+ o(\eps^2).
$$
\end{enumerate}
\end{definition}

Theorem \ref{eq.mena.value.charact} says that Definitions \ref{def.sol.viscosa.1} and \ref{def.sol.viscosa.asymp} are equivalent.
Therefore we have an asymptotic mean value characterization of solutions to \eqref{eq.1}.

\begin{proof}[Proof of Theorem \ref{eq.mena.value.charact}]
First, assume that the
asymptotic expansion 
$$
u(x) =  \min_{dim(S)=j} \max_{v\in S, |v|=1}
  \left\{ \frac12 u(x+\eps v) + \frac12 u(x-\eps v)  \right\}
+ o(\eps^2), \quad \mbox{as }\eps \to 0,
$$
holds in the viscosity sense. We have to show that 
$$
\lambda_j( D^2 u )  = 0
$$
also in the viscosity sense.

To this end take a point $x\in \Omega$ and a $C^2$-function $\phi$
such that 
$ u-\phi $ has a strict
minimum at the point $x \in \overline \Omega$  with $u(x)=
\phi(x)$. 

Since we assumed that the asymptotic expansion 
holds, from Definition \ref{def.sol.viscosa.asymp}
we have
\begin{equation}\label{pp}
0 \geq - \phi (x) +
 \min_{dim(S)=j} \max_{v\in S, |v|=1}
  \left\{ \frac12 \phi(x+\eps v) + \frac12 \phi (x-\eps v)  \right\}+ o(\eps^2).
\end{equation}

Consider the Taylor
expansion of the second order of $\phi$
\[
\phi(y)=\phi(x)+\nabla\phi(x)\cdot(y-x)
+\frac12\langle D^2\phi(x)(y-x),(y-x)\rangle+o(|y-x|^2)
\]
as $|y-x|\rightarrow 0$. Hence, we have
\begin{equation} \label{22}
\phi(x+\eps v)=\phi(x)+\eps \nabla\phi(x)\cdot v
+\eps^2 \frac12\langle D^2\phi(x)v,v\rangle+o(\eps^2)
\end{equation}
and
\begin{equation} \label{33}
\phi(x- \eps v)=\phi(x) - \eps \nabla\phi(x)\cdot v
+\eps^2 \frac12\langle D^2\phi(x)v,v\rangle+o(\eps^2).
\end{equation}
Adding these two expansions and using \eqref{pp} we arrive to 
$$
0 \geq \min_{dim(S)=j} \max_{v\in S, |v|=1}\left\{ \eps^2 \frac12\langle D^2\phi(x)v,v\rangle 
\right\}
+ o(\eps^2).
$$
Dividing by $\eps^2$ and taking limit as $\eps \to 0$ we get
$$
0 \geq \min_{dim(S)=j} \max_{v\in S, |v|=1} \left\{ \langle D^2\phi(x)v,v\rangle 
\right\} = \lambda_j(D^2 \phi(x)) = P_j (D^2 \phi (x)),
$$
as we wanted to show.

 The argument for the  case of supersolutions is analogous (just consider $\psi$ as in Definition 
 \ref{def.sol.viscosa.asymp} and reverse the inequalities).
 
 Now, let us take a viscosity solution to \eqref{eq.1}, and $x\in \Omega$ and a $C^2$ test function $\phi$ 
such that 
$ u-\phi $ has a strict
minimum at the point $x \in \overline \Omega$  with $u(x)=
\phi(x)$. 

Using again the Taylor expansions \eqref{22} and \eqref{33} we obtain
$$
\begin{array}{l}
\displaystyle
- \phi (x) +
 \min_{dim(S)=j} \max_{v\in S, |v|=1}
  \left\{ \frac12 \phi(x+\eps v) + \frac12 \phi (x-\eps v)  \right\} 
    \\[10pt]
\displaystyle \qquad = \min_{dim(S)=j} \max_{v\in S, |v|=1} \left\{ \langle D^2\phi(x)v,v\rangle 
\right\}  + o (\eps^2).
\end{array} 
$$
Using that $u$ is a viscosity solution to \eqref{eq.1}, from Definition \ref{def.sol.viscosa.1} we get
$$
0 \geq \min_{dim(S)=j} \max_{v\in S, |v|=1} \left\{ \langle D^2\phi(x)v,v\rangle 
\right\} = P_j (D^2 \phi (x)),
$$
and hence we conclude that
$$
0 \geq - \phi (x) +
 \min_{dim(S)=j} \max_{v\in S, |v|=1}
  \left\{ \frac12 \phi(x+\eps v) + \frac12 \phi (x-\eps v)  \right\}+ o(\eps^2),
$$
as we wanted to show.

The argument with $\psi$ is analogous.
\end{proof}

\medskip

{\bf Acknowledgements.} Partially supported by CONICET grant PIP GI No 11220150100036CO
(Argentina), by  UBACyT grant 20020160100155BA (Argentina) and by MINECO MTM2015-70227-P
(Spain).


\end{document}